\LetLtxMacro\orgvdots\vdots
\LetLtxMacro\orgddots\ddots
\DeclareRobustCommand\vdots{%
	\mathpalette\@vdots{}%
}
\newcommand*{\@vdots}[2]{%
	\sbox0{$#1\cdotp\cdotp\cdotp\m@th$}%
	\sbox2{$#1.\m@th$}%
	\vbox{%
		\dimen@=\wd0 %
		\advance\dimen@ -3\ht2 %
		\kern.5\dimen@
		\dimen@=\wd2 %
		\advance\dimen@ -\ht2 %
		\dimen2=\wd0 %
		\advance\dimen2 -\dimen@
		\vbox to \dimen2{%
			\offinterlineskip
			\copy2 \vfill\copy2 \vfill\copy2 %
		}%
	}%
}
\DeclareRobustCommand\ddots{%
	\mathinner{%
		\mathpalette\@ddots{}%
		\mkern\thinmuskip
	}%
}
\newcommand*{\@ddots}[2]{%
	\sbox0{$#1\cdotp\cdotp\cdotp\m@th$}%
	\sbox2{$#1.\m@th$}%
	\vbox{%
		\dimen@=\wd0 %
		\advance\dimen@ -3\ht2 %
		\kern.5\dimen@
		\dimen@=\wd2 %
		\advance\dimen@ -\ht2 %
		\dimen2=\wd0 %
		\advance\dimen2 -\dimen@
		\vbox to \dimen2{%
			\offinterlineskip
			\hbox{$#1\mathpunct{.}\m@th$}%
			\vfill
			\hbox{$#1\mathpunct{\kern\wd2}\mathpunct{.}\m@th$}%
			\vfill
			\hbox{$#1\mathpunct{\kern\wd2}\mathpunct{\kern\wd2}\mathpunct{.}\m@th$}%
		}%
	}%
}
\newtheorem{theorem}{Theorem}
\newtheorem{lemma}[theorem]{Lemma}
\newtheorem{corollary}[theorem]{Corollary}
\newtheorem{proposition}[theorem]{Proposition}
\theoremstyle{definition}
\newtheorem{remark}[theorem]{Remark}
\newtheorem{problem}[theorem]{Problem}
\newcommand{\calI}{{\mathcal I}}
\newcommand{\calJ}{{\mathcal J}}
\newcommand{\calP}{{\mathcal P}}
\newcommand{\calO}{{\mathcal O}}
\newcommand{\calX}{{\mathcal X}}
\newcommand{\bbP}{{\mathbb P}}
\newcommand{\bbC}{{\mathbb C}}
\newcommand{\bbK}{{\mathbb K}}
\def\geq{\geqslant}
\def\leq{\leqslant}
\def\le{\leqslant}
\def\ge{\geqslant}
\begin{document}
 
\title[Two letters by Guido Castelnuovo]{Two letters by Guido Castelnuovo}

\author{Ciro Ciliberto}
\address{Dipartimento di Matematica, Universit\`a di Roma Tor Vergata, Via O. Raimondo
 00173 Roma, Italia}
\email{cilibert@axp.mat.uniroma2.it}

\author{Claudio Fontanari}
\address{Dipartimento di Matematica, Universit\`a degli Studi di Trento, Via Sommarive 14, 38123 Povo, Trento}
\email{claudio.fontanari@unitn.it}
 
 
\keywords{Guido Castelnuovo, Beniamino Segre, Francesco Severi, holomorphic forms, Hodge theory, Fr\"olicher spectral sequence}

\begin{abstract} In this expository paper we transcribe two letters by Guido Castelnuovo, one to Francesco Severi, the other to Beniamino Segre, and explain the contents of both, which basically focus on the quest for an algebraic proof of the equality between the analytic and the arithmetic irregularity and of the closedness of regular 1--forms on a complex, projective, algebraic surface. Such an algebraic proof has been found only in the 1980's by Deligne and Illusie. 
\end{abstract}

\maketitle 

\section*{Introduction} 
As it is well-known, the treatise by Federigo Enriques epitomizing the celebrated classification of algebraic surfaces by the Italian school of algebraic geometry has been published posthumously in 1949, a few years after the sudden death of the author in 1946. As pointed out by Guido Castelnuovo in the preface (see \cite{E}), 

\begin{quote} 
(...) dove il terreno \`e meno solido l'Autore mette sull'avviso lo studioso. Di questi punti ancora fluidi quello che presenta la difficolt\`a pi\`u ardua ed il maggiore interesse \emph{\`e la teoria} dei sistemi continui di curve algebriche (...) che esistono sopra ogni superficie irregolare. (...) tutti i tentativi compiuti (...) per dimostrarla mediante considerazioni algebrico-geometriche si sono urtati contro difficolt\`a sinora insuperate. (...) l'Autore d\`a anche suggerimenti sopra una via da tentare per giungere alla meta. Debbo confessare che non vedo come quella via possa tradursi in un procedimento irreprensibile.

\medskip

\noindent
(...) where the ground is less solid the Author warns the reader. Among these still unsteady points the most difficult and interesting one \emph{is the theory} of the continuous systems of algebraic curves (...) existing on any irregular surface. (...) all attempts (...) towards an algebro-geometric approach have been frustrated by still insurmountable difficulties. (...) the Author provides some hints about a strategy to reach the goal. I should confess I cannot see how that strategy may be translated into a fully rigorous argument.

\end{quote} 

The two letters by Guido Castelnuovo that we  transcribe and translate into English in \S \ref{sec:letters}, the first one addressed to Francesco Severi and dated 1947, the second one addressed to Beniamino Segre and dated 1950, provide first hand witness of Castelnuovo's attempts to a purely algebro--geometric understanding of irregular surfaces.

In \S \ref{sec:reg} we explain the background of Castelnuovo's letters, using  modern terminology. In particular we  
explain a classical method, very familiar to Castelnuovo and due to Picard and Severi, of constructing regular 1--forms on a surface. As explained in \S \ref {ssec:closed}, one of the crucial points of Castelnuovo's approach is the attempt of proving the closedness of global regular 1-forms, a fact that today we are aware to strongly rely on the characteristic zero assumption. Indeed, Castelnuovo's remarks in his letters turn out to be quite inconclusive and sometimes even unprecise, as we discuss in Section \ref{sec:comments}, that is devoted to explaining most of the issues raised by Castelnuovo in his two letters.  The algebro-geometric proof of the results that Castelnuovo seeked (i.e., closedness of global regular 1-forms and equality of different definitions of irregularity,  in characteristic zero)  is now available, it is due to work by Deligne and Illusie in the 1980's  and turns out to be completely out of reach of Castelnuovo's classical tools, since (somehow paradoxically) it involves a tricky reduction to the case of positive characteristic.  Section \ref{sec:illusie} is devoted to give a brief account on how these algebraic proofs can be obtained using modern tools. 

We stress that the present note does not contain any original result, but in our opinion the contents of Castelnuovo's letters are worthy of 
careful consideration from both an historical and a mathematical viewpoint. This paper is addressed to readers who are well aware of rather advanced concepts in algebraic geometry, so we do not dwell on explaining standard technical details when they occur.  \medskip

{\bf Acknowledgements:} The authors are members of GNSAGA of the Istituto Nazionale di Alta Matematica ``F. Severi". This research project was partially supported by PRIN 2017 ``Moduli Theory and Birational Classification''.

\section{The letters}
\label{sec:letters}

In this section we transcribe two letters of Castelnuovo, the first one of November 26, 1947 to Francesco Severi, the second one of January 15, 1950 to Beniamino Segre. The first letter belongs to the ``Fondo Guido Castelnuovo'' of the Accademia Nazionale dei Lincei, that has been edited by Paola Gario, has been digitalized and can be found on the web page\medskip

\centerline{http://operedigitali.lincei.it/Castelnuovo/Lettere\_E\_Quaderni/menuL.htm}\medskip

The second letter comes from the collection of documents of Beniamino Segre kept at the University of Caltech. 

\subsection{Guido Castelnuovo to Francesco Severi} 

\begin{flushright} 
Roma, 26 novembre 1947
\end{flushright}

Caro Severi, 

aderendo al tuo desiderio ti comunico alcuni risultati sulle superficie irregolari; 
parecchi si ottengono senza difficolt\`a e possono servire come esercizio per i tuoi 
discepoli.

Lo scopo remoto ed ambizioso che mi proponevo era di costruire una teoria delle dette 
superficie indipendente dalla nozione di sistema continuo di curve, teoria in cui si 
ritrovassero il teorema sul numero $(p_g-p_a)$ dei differenziali totali indipendenti 
di prima specie, il teorema di Hodge, ecc.. Il programma \`e appena iniziato; ma si 
deve raggiungere la meta, a meno che la teoria delle superficie irregolari non riservi 
delle sorprese che non saprei nemmeno immaginare. 

\vspace{0.5cm}
1.

Indico con $\vert C \vert$ un sistema regolare di grado $n$ e genere $\pi$; in molti casi
occorre supporre che $\vert C \vert$ sia \emph{abbastanza ampio}, contenga entro di 
s\`e il sistema cano\-nico $\vert K \vert$ od anche un suo multiplo; ricercando caratteri 
invarianti, tutto ci\`o non ha importanza. Indico con $\chi$ il tuo invariante $q'$, 
cio\`e il numero delle curve indipendenti di $\vert 2C + K \vert$ che passano per il 
gruppo jacobiano $G_\delta$ di un fascio $\vert C \vert$ e in conseguenza per il gruppo 
base $G_n$ del fascio. Indico con $G_k$ il gruppo dei $k$ punti cuspidali di una superficie, 
d'ordine $n$, a singolarit\`a ordinarie, le cui sezioni piane appartengano al sistema 
$\vert C \vert$. 

Ecco un significato di $\chi$ che si raggiunge subito:

\vspace{0.3cm}
\noindent
1) \emph{E' $\chi$ la sovrabbondanza del sistema $\vert 4C + 2K \vert$ rispetto 
al gruppo dei punti cuspidali $G_k$} (cio\`e $G_k$ impone $k -\chi$ condizioni al 
detto sistema).

Invece $G_k$ presenta condizioni indipendenti ai sistemi $\vert mC + K \vert$,
$\vert mC + 2K \vert$, \ldots, per $m \ge 5$.

Per $m=4$ vi \`e un risultato di Enriques ottenuto indirettamente attraverso 
il computo dei moduli di una superficie, risultato che converrebbe dimostrare 
direttamente; lo ricordo perch\'e interviene tra poco: "La sovrabbondanza 
del sistema $\vert 4C + K \vert$ rispetto al gruppo $G_k$ dei punti cuspidali 
\`e un invariante", che indicher\`o con $Q'$ e di cui sotto dar\`o l'espressione. 

\vspace{0.3cm}
\noindent
2) \emph{La serie completa $g_k$ determinata dal gruppo $G_k$ sopra una curva 
di $\vert 4C + K \vert$ passante per esso ha la dimensione $\chi$}.

\vspace{0.3cm}
\noindent
3) \emph{La serie completa $g_k$ determinata dal gruppo $G_k$ sopra una curva 
di $\vert 3C + K \vert$ passante per esso (ad es.: sulla $f=f'_x=0$) ha la dimensione
$2n -\pi+2p_g+p_a-(I+4)+\theta$ dove $0 \le \theta \le p_g-p_a$} (si suppone 
$\vert C \vert$ abbastanza ampio). E' $\theta$ un invariante?

\vspace{0.5cm}
2.

Il procedimento che ti ha condotto a stabilire l'invarianza di $q'=\chi$ fa vedere 
subito che:

\vspace{0.3cm}
\noindent 
4) \emph{E' invariante il numero delle curve linearmente indipendenti di 
$\vert 2C + 2K \vert$ che passano per il gruppo jacobiano $G_\delta$ di un fascio 
$\vert C \vert$ ed anche per il gruppo base $G_n$}; indicher\`o questo invariante 
con $Q$.

Si vede poi (se \`e esatto il risultato di Enriques sopra citato) che: 

\vspace{0.3cm}
\noindent 
5) \emph{E' pure invariante il numero delle curve linearmente indipendenti 
di $\vert 2C + 2K \vert$ che passano per il gruppo $G_\delta$ senza esser costrette 
a passare per $G_n$}; questo nuovo invariante uguaglia l'invariante di Enriques $Q'$. 

E' quindi invariante il numero delle condizioni che una curva di $\vert 2C + 2K \vert$
passante per il gruppo jacobiano $G_\delta$ di un fascio $\vert C \vert$ deve soddisfare 
per contenere il gruppo base. Si dimostra che questo invariante soddisfa alla diseguaglianza
$Q'-Q \le p_g$. 

Quanto alle espressioni di $Q$ e $Q'$ posso dir questo. 

Se le $\infty^{Q-1}$ curve di $\vert 2C + 2K \vert$ passanti per $G_\delta + G_n$ segano 
sopra una curva di $\vert 2C + K \vert$ passante per lo stesso gruppo una serie 
\emph{completa} (residua di $G_\delta + G_n$ rispetto alla serie canonica) allora: 
$$
Q-1 = p_a + p_g + p^{(1)} - (I-4) + \omega
$$
dove $\omega$ ($\le p_g-p_a$) \`e un nuovo invariante che ha un significato molto semplice: 
$I+4-\omega-1$ \`e il numero delle condizioni che un gruppo $G_{I+4}$ della tua serie 
d'equivalenza (in senso stretto) presenta alle curve \emph{bicanoniche} costrette 
a contenerlo. 

Se la serie lineare nominata non \`e completa, dall'espressione di $Q-1$ va tolta la 
deficienza $\le p_g-p_a$ della serie stessa. 

Nello stesso ordine d'idee ti comunico ancora questo risultato: 

\vspace{0.3cm}
\noindent 
6) \emph{La sovrabbondanza del sistema $\vert 3C + K \vert$ rispetto al gruppo 
jacobiano $G_\delta$ di un fascio $\vert C \vert$ \`e un invariante e vale precisamente 
$2p_g$} (se il sistema completo $\vert C \vert$ cui il fascio appartiene \`e abbastanza 
ampio).

\vspace{0.5cm}  
3.

Altre questioni. Come sai il teorema fondamentale da dimostrare \`e questo: una curva 
$\Gamma$ di $\vert 2C + K \vert$ passante per il gruppo jacobiano $G_\delta$ e il gruppo 
base $G_n$ di un fascio $\vert C \vert$ sega sopra la curva generica $C$ del fascio 
(fuori di $G_n$) un gruppo canonico \emph{che non appartiene ad una curva aggiunta 
$C'$}. Ho cercato di trasformare la condizione in altre equivalenti. Tale \`e ad esempio 
questa: il gruppo $G_n$ su quella curva deve presentare condizioni indipendenti alla 
serie caratteristica di $\Gamma$ \emph{resa completa}. Alla serie caratteristica 
in senso stretto, $G_n$ presenta solo $n-1$ condizioni. 

Altra forma: Scriviamo la identit\`a di Picard in coordinate omogenee: 
$$
Xf'_x+Yf'_y+Zf'_z+Tf'_t=0
$$
dove $X=0, \ldots$ sono superficie d'ordine $n-3$. Occorre aggiungere 
la condizione (non detta esplicitamente) che le superficie $yX-xY=0, 
\ldots, tZ-zT=0$ siano aggiunte alla $f=0$ d'ordine $n$. Segue che le 
$X=0, \ldots, T=0$ passano semplicemente per i $t$ punti tripli di $f$ 
ed hanno inoltre in comune un gruppo di $(n-4)d-3t$ punti sulla curva 
doppia d'ordine $d$ di $f$; esse segano inoltre rispettivamente i piani 
$x=0, \ldots, t=0$ in curve aggiunte d'ordine $n-3$. Da ci\`o segue che 
quel gruppo di punti della curva doppia appartiene alla serie segata su 
questa dalle superficie d'ordine $n-4$ passanti semplicemente per i $t$ 
punti tripli, purch'e questa serie venga resa completa, mentre essa ha 
la deficienza $p_g-p_a$ per la definizione stessa di irregolarit\`a. 
Orbene il teorema fondamentale equivale al seguente: \emph{Per quel 
gruppo di $(n-4)d-3t$ punti della curva doppia non passa nessuna superficie 
d'ordine $n-4$ che contenga i $t$ punti tripli di $f$}. Questo enunciato 
si traduce in questo altro, molto elegante dal punto di vista analitico: 
\emph{Non \`e possibile soddisfare una identit\`a del tipo:
$$
\overline{X}f'_x+\overline{Y}f'_y+\overline{Z}f'_Z+\overline{T}f'_t 
\equiv Qf
$$
ove $\overline{X}=0, \ldots, \overline{T}=0$ sono superficie 
aggiunte d'ordine $n-3$ e $Q=0$ una superficie d'ordine $n-4$ se non nel 
caso banale $\overline{X}= \frac{1}{n}xQ, \ldots, \overline{T}= \frac{1}{n}tQ$}.
  
Ritornando all'identit\`a di Picard scritta sopra, ti consiglio di far studiare 
da qualche discepolo la omografia tra il sistema di superficie non aggiunte di 
ordine $n-3$ $\lambda X + \mu Y + \nu Z + \rho T = 0$ e il sistema di piani 
$\lambda x + \mu y + \nu z + \rho t = 0$, ognuno dei quali taglia la superficie 
corrispondente in una curva aggiunta. Nel caso delle rigate irrazionali dei 
primi ordini si trovano propriet\`a elegantissime.  

\vspace{0.5cm}
4.

Finalmente alcune osservazioni che ti potranno servire se esporrai in lezione la tua Nota 
sugli integrali semiesatti. 

Tu dimostri che ad ogni curva di $\vert 2C + K \vert$ passante per il gruppo jacobiano 
e per il gruppo base di un fascio $\vert C \vert$ (\emph{curva covariante del fascio}, 
come io la chiamo) si pu\`o associare una determinata curva covariante di ogni altro 
fascio $\vert D \vert$. Due curve associate segano sullo stesso gruppo di punti la 
curva di contatto di due fasci. Esse inoltre si segano in un gruppo $G_{I+4}$ della 
tua serie. Si vede facilmente che questo gruppo \`e comune a tutta la famiglia di 
curve covarianti associate relative agli infiniti fasci esistenti sulla superficie. 
Ogni curva $C$ di $f=0$ \`e segata dalla curva covariante della famiglia in un gruppo 
canonico che dir\`o \emph{gruppo traccia}.

Preso un punto $P$ della superficie, esistono infinite curve per $P$ per le quali  
$P$ appartiene al gruppo traccia. \emph{Tutte queste curve si toccano in $P$}. 
Vuol dire che ad ogni punto $P$ di $f=0$ \`e collegata una direzione tangente, 
o un elemento lineare uscente da $P$ (indeterminato solo se $P$ appartiene al 
gruppo $G_{I+4}$). Connettendo tutti questi elementi si viene a ricoprire la 
superficie con un fascio di curve (trascendenti) che risultano esser le curve 
integrali dell'equazione $Bdx-Ady=0$, dove $A=0$ e $B=0$ sono due superficie 
aggiunte d'ordine $n-2$ secanti su $f$ le curve covarianti dei fasci $x =$ 
cost., $y =$ cost. Resterebbe naturalmente da far vedere che $1/f'_z$ \`e 
fattore integrante dell'espressione differenziale. 

Al variare di $P$ su $f$ quella tangente in $P$ descrive una congruenza algebrica 
di classe $2 \pi - 2$ e di ordine $k - \nu = 6 \pi - 6 + p^{(1)} - 1 - (I+4)$ 
ove $k$ \`e  il numero di punti cuspidali e $\nu$ \`e l'ordine della curva 
$f=f'_x=0$. 

E qui termino questa lunghissima lettera che vorrei potesse spingere a colmare 
nella teoria delle superficie quella lacuna che tutti avvertiamo. 

Cordiali saluti dal tuo aff.mo

\begin{flushright} 
GUIDO CASTELNUOVO
\end{flushright}

\vspace{1cm}

\begin{flushright} 
Rome, November 26, 1947
\end{flushright}

Dear Severi, 

following your wishes I am going to tell you some results about irregular surfaces; 
many of them are easily obtained and may be useful as exercises for your students. 
My ambitious and ultimate purpose was to build a theory of such surfaces independent 
of the notion of continuous system of curves, a theory embracing the theorem on the 
number $(p_g-p_a)$ of independent global differentials of the first kind, the theorem 
of Hodge, etc.. This program has just started; but the goal should be achieved, 
unless the theory of irregular surfaces hide amazing things I could not even imagine.   

\vspace{0.5cm}
1.

Let $\vert C \vert$ be a regular system of degree $n$ and genus $\pi$; in many cases
we need to assume that $\vert C \vert$ is \emph{sufficiently ample}, containing the 
canonical system $\vert K \vert$ or even one of its multiples; since we are looking 
for invariant characters, this is immaterial. I denote by $\chi$ your invariant $q'$, 
namely, the number of independent curves of $\vert 2C + K \vert$ passing through the 
jacobian group $G_\delta$ of a pencil $\vert C \vert$, hence through the base locus 
$G_n$ of the pencil. Let $G_k$ be the group of the $k$ cuspidal points\footnote{The usual 
English term for \emph{cuspidal points} is \emph{pinch points}.} of a surface, 
of degree $n$, with only ordinary singularities, and whose plane sections belong to 
the system $\vert C \vert$. 

Here is a meaning of $\chi$ which is immediate:

\vspace{0.3cm}
\noindent
1) \emph{The invariant $\chi$ is the superabundance of the system $\vert 4C + 2K \vert$ 
with respect to the cuspidal points $G_k$} (i.e. $G_k$ imposes $k -\chi$ conditions 
to such system).

On the other hand, $G_k$ gives independent conditions to the systems
$\vert mC + K \vert$, $\vert mC + 2K \vert$, \ldots, per $m \ge 5$.

For $m=4$ there is a result of Enriques, indirectly obtained by a moduli computation 
for a surface, but which should be directly proven; I recall it because it is coming 
into play shortly later: ``The superabundance of the system $\vert 4C + K \vert$ 
with respect to the group $G_k$ of cuspidal points is an invariant'', which I 
will denote $Q'$ and whose expression I am going to give below.

\vspace{0.3cm}
\noindent
2) \emph{The complete series $g_k$ determined by the group $G_k$ on a curve of 
$\vert 4C + K \vert$ passing through it has dimension $\chi$}.

\vspace{0.3cm}
\noindent
3) \emph{The complete series $g_k$ determined by the group $G_k$ on a curve of
$\vert 3C + K \vert$ passing through it (for instance: on $f=f'_x=0$) has dimension
$2n -\pi+2p_g+p_a-(I+4)+\theta$ where $0 \le \theta \le p_g-p_a$} (assume 
$\vert C \vert$ sufficiently ample). Is $\theta$ an invariant?

\vspace{0.5cm}
2.

The same argument which led you to establish the invariance of
$q'=\chi$ immediately shows:

\vspace{0.3cm}
\noindent 
4) \emph{It is invariant the number of linearly independent curves of 
$\vert 2C + 2K \vert$ passing through the jacobian group $G_\delta$ of a pencil 
$\vert C \vert$ and also through the base group of $G_n$}; I will denote 
this invariant by $Q$.

Then one sees (if the aforementioned result of Enriques is correct) that: 

\vspace{0.3cm}
\noindent 
5) \emph{It is also invariant the number of linearly independent curves of
$\vert 2C + 2K \vert$ passing through the group $G_\delta$ without having to pass 
through $G_n$}; this new invariant is equal to Enriques invariant $Q'$. 

It is therefore invariant the number of conditions that a curve of $\vert 2C + 2K \vert$
passing through the jacobian group $G_\delta$ of a pencil $\vert C \vert$ has to satisfy 
in order to contain the base group. One proves that this invariant satisfies the inequality
$Q'-Q \le p_g$. 

Regarding the expressions of $Q$ e $Q'$ I can state the following.

If the $\infty^{Q-1}$ curves of $\vert 2C + 2K \vert$ passing through $G_\delta + G_n$ cut
on a curve of $\vert 2C + K \vert$ passing through the same group a \emph{complete} series 
(residual of $G_\delta + G_n$ with respect to the canonical series) then: 
$$
Q-1 = p_a + p_g + p^{(1)} - (I-4) + \omega
$$
where $\omega$ ($\le p_g-p_a$) is a new invariant which has a very simple meaning: 
$I+4-\omega-1$ is the number of conditions that a group $G_{I+4}$ of your series 
of equivalence (in the strict sense) prescribes to the \emph{bicanonical} curves 
forced to contain it.  

If such a linear series is not complete, from the expression of $Q-1$ one has to subtract 
the deficiency $\le p_g-p_a$ of the series. 

In the same circle of ideas I also tell you the following result:

\vspace{0.3cm}
\noindent 
6) \emph{The superabundance of the system $\vert 3C + K \vert$ with respect to the jacobian 
group $G_\delta$ of a pencil $\vert C \vert$ is an invariant and its value is precisely 
$2p_g$} (if the complete system $\vert C \vert$ to which the pencil belong is sufficiently 
ample).

\vspace{0.5cm}  
3.

Other issues. As you know, the fundamental theorem to be proven is the following: 
a curve $\Gamma$ of $\vert 2C + K \vert$ passing through the jacobian group 
$G_\delta$ and the base group $G_n$ of a pencil $\vert C \vert$ cuts on the generic 
curve $C$ of the pencil (off $G_n$) a canonical group \emph{which does not belong to 
an adjoint curve  $C'$}. I tried to translate this condition into other equivalent 
formulations. Such is for instance the following one: the group $G_n$ on that curve 
has to impose independent conditions to the characteristic series of $\Gamma$ 
\emph{made complete}. To the characteristic series in the strict sense, $G_n$
imposes only $n-1$ conditions. 

Other formulation: Let us write Picard's identities in homogeneous coordinates: 
$$
Xf'_x+Yf'_y+Zf'_z+Tf'_t=0
$$
where $X=0, \ldots$ are surfaces of degree $n-3$. We have to add the condition 
(not explicitly stated) that the surfaces $yX-xY=0, \ldots, tZ-zT=0$ 
are adjoint to $f=0$ of degree $n$ \footnote{This is clearly an error, Castelnuovo means adjoint of degree $n-2$.}. It follows that $X=0, \ldots, T=0$ 
pass simply through the $t$ triple points of $f$ 
and moreover share a group of $(n-4)d-3t$ points on the double curve of degree 
$d$ of $f$; furthermore, they cut the planes $x=0, \ldots, t=0$, respectively, 
in adjoint curves of degree $n-3$. Hence it follows that such group of points 
of the double curve belongs to the series cut on this curve by the surfaces 
of degree $n-4$ passing simply through the $t$ triple points, provided this 
series has been made complete, while it has deficiency $p_g-p_a$ 
by the very definition of irregularity. 
Now, the fundamental theorem is equivalent to the following: 
\emph{Through such groups of $(n-4)d-3t$ points of the double curve 
it does not pass any surface of degree $n-4$ containing the $t$ triple points of $f$}\footnote{The right statement here would be: \emph{Through such groups of $(n-4)d-3t$ points of the double curve it does not pass any surface of degree $n-4$ containing the $t$ triple points of $f$ and not containing the double curve.}}. 
This statement translates into the following one, which is quite elegant from the 
analytic viewpoint: 
\emph{It is impossible to verify an identity of the form:
$$
\overline{X}f'_x+\overline{Y}f'_y+\overline{Z}f'_Z+\overline{T}f'_t 
\equiv Qf
$$
where $\overline{X}=0, \ldots, \overline{T}=0$ are adjoint surfaces of degree
$n-3$ and $Q=0$ is a surface of degree $n-4$ except in the trivial case 
$\overline{X}= \frac{1}{n}xQ, \ldots, \overline{T}= \frac{1}{n}tQ$}.
  
Going back to Picard's identity as written above, I suggest to you to propose to 
some student to investigate the homography between the system of non--adjoint degree $n-3$ surfaces
 $\lambda X + \mu Y + \nu Z + \rho T = 0$ and the system 
of planes $\lambda x + \mu y + \nu z + \rho t = 0$, each cutting the corresponding 
surface in an adjoint curve. In the case of irrational ruled surfaces of low degree 
one finds very elegant properties.  

\vspace{0.5cm}
4.

Finally, a few remarks you may find useful if you will present in a course your note 
about semiexact integrals. 

You prove that to every curve of $\vert 2C + K \vert$ passing through the jacobian group 
and the base group of a pencil $\vert C \vert$ (\emph{covariant curve of the pencil}, 
as I call it) one can associate a unique covariant curve of every other pencil 
$\vert D \vert$. Two associated curves cut on the same group of points the contact 
curve of two pencils. They moreover cut each other in a group $G_{I+4}$ 
of your series. One easily checks that this group is common to the whole family 
of associated covariant curves with respect to the infinitely many pencils on the 
surface. Every curve $C$ of $f=0$ is cut by the covariant curve of the family 
in a canonical group which I will call \emph{trace group}.

Taken a point $P$ of the surface, there exist infinitely many curves through $P$ 
such that $P$ belongs to the trace group. \emph{All these curves intersect in $P$}. 
It means that to every point $P$ of $f=0$ is associated a tangential direction, 
or a linear element (not defined only if $P$ belongs to the group $G_{I+4}$). 
By connecting all these elements the surface is covered by a pencil of (transcendental)
curves which turn out to be the integral curves of the equation $Bdx-Ady=0$, where $A=0$ 
and $B=0$ are two adjoint surfaces of degree $n-2$ cutting on $f$ the covariant curves 
of the pencils $x =$ const., $y =$ const. Of course one should show that $1/f'_z$ 
is an integral factor of the differential expression. 

Varying $P$ on $f$ the tangent in $P$ describes an algebraic congruence 
of class $2 \pi - 2$ and degree $k - \nu = 6 \pi - 6 + p^{(1)} - 1 - (I+4)$ 
where $k$ is the number of cuspidal points and $\nu$ is the degree of the curve 
$f=f'_x=0$. 

Here I stop this quite long letter I wish it could stimulate to fill in the theory 
of surfaces that gap we all perceive.

Best regards, yours friendly 

\begin{flushright} 
GUIDO CASTELNUOVO
\end{flushright}

\subsection{Guido Castelnuovo to Beniamino Segre} 

\begin{flushright} 
Roma, 15 genn. 50
\end{flushright}

Caro Professore, 

In relazione alla nostra conversazione di venerd\`i scorso e al programma di ricerche 
di cui Le parlavo, penso di sottoporle una questione, risolta la quale si sarebbe compiuto 
un passo notevole verso la meta cui Le accennavo. Si tratta di una questione di geometria 
algebrica, la quale, ove si possano togliere alcune restrizioni forse non necessarie, 
si muta in una questione relativa alle equazioni alle derivate parziali con condizioni 
al contorno. Con i mezzi svariati e potenti di cui Ella dispone potr\`a affrontarla 
e pervenire alla risposta desiderata. 

Sia $f(x,y,z,t)$ una superficie (in coord. omog.) d'ordine $n$, irriducibile, con 
singolarit\`a ordinarie; e siano $X=0$, $Y=0$, $Z=0$, $T=0$ quattro superficie 
aggiunte d'ordine $n-3$. Si tratta di dimostrare che un'identit\`a del tipo
\begin{equation} \label{one}  
X f'_x + Y f'_y + Z f'_z + T f'_t = Q f,
\end{equation}
con $Q$ polinomio di grado $n-4$, \underline{non} pu\`o sussistere salvo nel caso banale 
(identit\`a di Eulero) $X =\frac{1}{n}xQ, \ldots, T=\frac{1}{n}tQ$. Per farle vedere 
l'interesse della questione Le dir\`o che se si toglie la condizione che le sup. 
$X, \ldots, T$ siano aggiunte, e si sostituisce con la condizione meno stretta che 
siano aggiunte le sei superficie d'ordine $n-2$ $yX-xY=0, \ldots$, allora la identit\`a 
pu\`o sussistere con $Q$ identicamente nulla; anzi di identit\`a di quel tipo ve ne sono 
$p_g - p_a$ indipendenti per una superficie irregolare (Picard). 

Ritornando alla (\ref{one}), supposto che essa possa aver luogo, si vedrebbe che la 
superficie $Q=0$ incontra la curva doppia di $f=0$ nei punti tripli e nei punti ove 
$X'_x + Y'_y + Z'_z + T'_t = 0$, donde si concluderebbe che 
$Q \equiv X'_x + Y'_y + Z'_z + T'_t + \overline{Q}$ (salvo un fattore costante), 
essendo $\overline{Q}=0$ una superficie \underline{aggiunta} d'ordine $n-4$ che 
darebbe luogo a un integrale doppio senza periodi; e di qua l'assurdo (Hodge). 
Ma io richiedo evidentemente una dimostrazione pi\`u diretta e pi\`u elementare 
di quella qui abbozzata. 

Ci pensi quando ha tempo, perch\'e mi pare ne valga la pena. Cordiali saluti; aff.mo 

\begin{flushright} 
G. Castelnuovo
\end{flushright}

\vspace{1cm}

\begin{flushright} 
Rome, January 15, 1950
\end{flushright}

Dear Professor, 

Concerning our conversation of last Friday and the research program I exposed to you, 
I am going to propose to you a question, whose solution would provide a remarkable step 
towards the goal I mentioned. It is a question in algebraic geometry, which, up to 
removing some maybe unnecessary restrictions, translates into a question in 
partial differential equations with boundary conditions. By applying the many 
and poweful tools you have at your disposal you could address it and obtain the 
desired answer. 

Let $f(x,y,z,t)$ be a surfaces (in homogeneous coordinates) of degree $n$, irreducible, 
with ordinary singularities; let $X=0$, $Y=0$, $Z=0$, $T=0$ be four adjoint surfaces 
of degree $n-3$. The point is to show that an identity of the form
\begin{equation} \label{two}  
X f'_x + Y f'_y + Z f'_z + T f'_t = Q f,
\end{equation}
with $Q$ polynomial of degree $n-4$, is \underline{not} satisfied unless in the trivial case 
(Euler identity) $X =\frac{1}{n}xQ, \ldots, T=\frac{1}{n}tQ$. In order to show you the interest 
of the question I will tell you that if one drops the condition that the surfaces 
$X, \ldots, T$ are adjoint, and one replaces it by the less strict condition that 
the six degree $n-2$ surfaces  $yX-xY=0, \ldots$ are adjoint, then the identity 
may hold with $Q$ identically zero; indeed, there are $p_g - p_a$ independent 
such identities for an irregular surface (Picard). 

Coming back to (\ref{two}), assuming it may hold, one would see that the surface  
$Q=0$ meets the double curve of $f=0$ in the triple points and in the points where 
$X'_x + Y'_y + Z'_z + T'_t = 0$, whence one would conclude that 
$Q \equiv X'_x + Y'_y + Z'_z + T'_t + \overline{Q}$ (up to a constant factor), 
where $\overline{Q}=0$ would be an \underline{adjoint} surface of degree $n-4$ 
which would give rise to a double integral without periods, hence a contradiction 
(Hodge). But of course I am looking for a more direct and more elementary proof 
than the one sketched here. 

Please think about that when you have time, because I believe it is worth the trouble. 
Best regards; yours friendly

\begin{flushright} 
G. Castelnuovo
\end{flushright}

\section{Regular 1--forms on a surface}
\label{sec:reg}

If $X$ is a smooth, irreducible, projective surface over an algebraically closed field $\bbK$, the elements of $H^0(X,\Omega^1_X)$ are called \emph{regular 1--forms} on $X$. We will denote the dimension of $H^0(X,\Omega^1_X)$ by $q_{\rm an}(X)$ (or simply by $q_{\rm an}$ if there is no danger of confusion) and we will call it the \emph{analytic irregularity} of $X$ (see \cite {Cil}).

In this section we want to explain the background of Castelnuovo's letters, using  modern terminology. In particular we want to 
explain a classical method, very familiar to Castel\-nuovo and due to Picard and Severi, of constructing regular 1--forms on a surface. In  \S \ref {ssec:closed}, we explain Castelnuovo's viewpoint on the attempts of proving closedness of regular 1--forms on a surface.

\subsection{The general set up}\label{ssec:setup}

Let $X$ be a smooth, irreducible, projective surface over an algebraically closed field $\bbK$.  We may assume $X$ to be linearly normally embedded as a surface of degree $d$ in a projective space $\bbP^r$, with $r\geq 5$, in such a way that the following happens. If we consider a general projection $\pi$ of $S$ to $\bbP^3$, whose image is a surface $S$ of degree $d$, then $S$ has \emph{ordinary singularities} (see \cite [Thm. 2]{Ro}), i.e., it has:\\
\begin{inparaenum}
\item [$\bullet$] an irreducible \emph{nodal} double curve $\Gamma$, i.e., $S$ has normal crossings at the general point of $\Gamma$,\\
\item[$\bullet$]  a finite number of triple points for both $\Gamma$ and $S$, the triple points for $\Gamma$ are \emph{ordinary}, i.e., the tangent cone there to $\Gamma$ consists of the union of three non--coplanar lines, and the tangent cone there to $S$ consists of the union of three distinct planes,\\
\item [$\bullet$] finitely many \emph{pinch points} on $\Gamma$; we will denote by $G_c$ the \emph{pinch points scheme}, i.e., the reduced zero--dimensional scheme on $X$ where the differential of $\pi$ drops rank, so that $G_c$ is mapped by $\pi$ to the set of pinch points of $S$ on $\Gamma$. We will set $\gamma={\rm length}(G_c)$. 
\end{inparaenum}

The map $\pi: X\to S$ is the normalization map.

 We will introduce homogeneous coordinates $[x_1,x_3,x_3,x_4]$ in $\bbP^3$ and related affine coordinates $(x,y,z)$, with 
 $$
 x=\frac {x_1}{x_4}, \quad y=\frac {x_2}{x_4}, \quad z=\frac {x_3}{x_4}
 $$
 so that $x_4=0$ is the \emph{plane at infinity}. We assume that the coordinates (i.e., the corresponding fundamental points) are general with respect to $S$. The homogeneous equation of $S$ is of the form $F(x_1,x_2,x_3,x_4)=0$, with $F$ an irreducible homogeneous polynomial of degree $d$ and the affine equation of $S$ is $f(x,y,z)=0$, with $f(x,y,z)=F(x,y,z,1)$. We will denote by $f_x,f_y,f_z$ the partial derivatives of $f$ with respect to $x,y,z$ and by $F_i$ the partial derivative of $F$ with respect to $x_i$, for $1\leq i\leq 4$ (we will use similar notations for other polynomials). Note that
\begin{equation}\label{eq:der}
 f_x(x,y,z)=F_1(x,y,z,1)
\end{equation}
 and similarly for the other derivatives. Therefore, by Euler's identity, we have
\begin{equation}\label{eq:eul}
 d\cdot f(x,y,z)=xF_1(x,y,z,1)+yF_2(x,y,z,1)+zF_3(x,y,z,1)+F_4(x,y,z,1).
\end{equation}
 
By the generality assumption of the  coordinates  with respect to $S$ we have that:\\
 \begin{inparaenum}
 \item [$\bullet$] the plane at infinity  is not \emph{tangent} to $S$, i.e., it cuts out on $S$ a curve whose pull--back on $X$ via $\pi$ is smooth;\\
 \item [$\bullet$] each of the pencils $\calP_i$ of planes with homogeneous equations $hx_i=kx_4$, with $(h,k)\in \bbK\setminus \{(0,0)\}$, pulls back via $\pi$ to a \emph{Lefschetz pencil} $\calX_i$ on $X$, with $1\leq i\leq 3$;\\
  \item [$\bullet$] the pull--back $\Gamma_i$ on $X$ of the curve $\gamma_i$ cut out on $S$ off the double curve $\Gamma$ by the \emph{polar} surfaces $F_i=0$ is smooth for $1\leq i\leq 3$. 
 \end{inparaenum}
 
\begin{remark}\label{rem:var} By the genericity of the position of $S$ with respect to the coordinate system, one sees that the curves $\Gamma_i$, for $1\leq i\leq 3$, contain the pinch points scheme $G_c$ and  intersect pairwise transversely there. 

The singular points of the finitely many curves in the pencil $\calX_i$ are nodes and form a reduced 0--dimensional scheme $\calJ_i$ on $X$, which is called the \emph{jacobian scheme} of $\calX_i$, for $1\leq i\leq 3$. We will assume that, for all $i\in \{1,2,3\}$, the image $J_i$  of this scheme on $S$, called the  \emph{jacobian scheme} of $\calP_i$, has no intersection with the double curve $\Gamma$. 

It is also easy to check that the curve $\Gamma_i$ cuts out on $\Gamma_j$ the divisor $G_c+\calJ_k$, where $\{i,j,k\}=\{1,2,3\}$. So, in particular, taking into account that
$\Gamma_i\in |3C+K_X|$, for $1\leq i\leq 3$ (we denote by $C$ a hyperplane section of $X$), one has
\begin{equation}\label{eq:uno}
\calO_{\Gamma_3}(G_c+\calJ_1)=\calO_{\Gamma_3}(G_c+\calJ_2)=\calO_{\Gamma_3}(3C+K_X),
\end{equation}
hence
\begin{equation}\label{eq:due}
 \calO_{\Gamma_3}(\calJ_1)=\calO_{\Gamma_3}(\calJ_2).
\end{equation}
Similar relations hold on $\Gamma_2$ and $\Gamma_3$. Note that \eqref {eq:uno} implies that $|3C+K_X|$ has no fixed component and $(3C+K_X)^2>0$, hence $3C+K_X$ is big and nef.
\end{remark}

Let $e:=e(X)$ be the \emph{Euler--Poincar\'e characteristic} of $X$ (i.e., the second \emph{Chern class} of the tangent bundle of $X$) and $g$ the arithmetic genus of the hyperplane sections of $X$. By the \emph{Zeuthen--Segre formula} (see \cite [p. 301]{Fu}), the length $\delta$ of $J_i$ is
 $$
 \delta=e+4(g-1)+d.
 $$

 \subsection{The expression of 1--forms on a surface}
 It is a result by Picard (see \cite [p. 116]{PS}, Picard works over $\bbC$ but it is easy to check that his argument works on any algebraically closed field $\bbK$) that if $\omega$ is  a regular 1--form on $X$, then it is the pull--back on $X$ of a rational 1--form of the type
\begin{equation}\label{eq:1form}
\frac {Ady-Bdx}{f_z}
\end{equation}
 where $A=0,B=0$ are affine equations of two \emph{adjoint surfaces}  of degree $d-2$ to $S$. Recall that a surface is said to be adjoint to $S$ if it contains the double curve $\Gamma$ of $S$.
 
 In the 1--form \eqref {eq:1form} we can make a change of variables passing from $x,y$ to $x,z$. From the relation
 $$
 f_xdx+f_ydy+f_zdz=0
 $$
 we deduce
 $$
 dy=-\frac {f_xdx+f_zdz}{f_y}.
 $$
Substituting into \eqref {eq:1form} we find
 $$
\frac{-\frac {Af_x+Bf_y}{f_z}dx -Adz}{f_y}
 $$
 and this has to be of the same form as \eqref {eq:1form} with respect to the variables $x,z$. This implies that there must be a polynomial $C$ of degree $d-2$ such that $C=0$ is the affine equation of an adjoint surface to $S$, such that
 $$
 -\frac {Af_x+Bf_y}{f_z}=C, \quad \text{modulo}\quad f=0.
 $$
 This yields the  \emph{Picard's relation}
\begin{equation}\label{eq:pic}
 Af_x+Bf_y+Cf_z=Nf
\end{equation}
 where $N$ is a suitable polynomial of degree $d-3$. The Picard relation has some remarkable consequences, pointed out by Severi (see \cite [\S 9]{Sev28}). Before stating Severi's result, we recall the following:
 
 \begin{lemma}[Castelnuovo's Lemma] \label{lem:cast} Let $g(x_1,x_2,x_3)=0$ be the equation of an irreducible plane curve of degree $n$ with no singular points except nodes. Then there is no non--trivial syzygy of degree $l\leq d-2$ of the triple $(g_1,g_2,g_3)$ of derivatives of $g$. 
 \end{lemma}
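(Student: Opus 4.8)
The plan is to reinterpret a syzygy $A_1g_1+A_2g_2+A_3g_3=0$ (with $\deg A_i=l\le n-2$, $n=\deg g$) as the vector field $v=\sum_i A_i\partial_{x_i}$ on $\mathbb{P}^2$ satisfying $v(g)=0$, and to use Euler's identity $\sum_i x_ig_i=n\,g$ as the single relation that prevents the obvious candidate from being an honest syzygy. Geometrically, $v(g)=0$ forces the rational map $[A_1:A_2:A_3]$ to send every smooth point $p$ of $C=\{g=0\}$ into its tangent line $T_pC=\{\sum_i g_i(p)X_i=0\}$. The tautological solution $A_i=x_i$ has this property, since a point lies on its own tangent line, yet it is not a syzygy because $\sum_i x_ig_i=n\,g\neq0$. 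Thus the entire difficulty is to show that in low degree nothing survives beyond such Euler solutions.

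First I would restrict to $C$. Since $l\le n-1$, each $A_i$ is recovered from its restriction $a_i:=A_i|_C\in H^0(\mathcal O_C(l))$ (the restriction map is injective as $H^0(\mathcal O_{\mathbb P^2}(l-n))=0$), and the relation becomes $\sum_i a_i\gamma_i=0$ with $\gamma_i:=g_i|_C\in H^0(\mathcal O_C(n-1))$. The Euler field provides the solutions $a_i=b\,x_i|_C$ for $b\in H^0(\mathcal O_C(l-1))$, which solve the restricted relation precisely because $\sum_i x_i\gamma_i=(n\,g)|_C=0$. Conversely, if a solution is of this Euler type, then choosing a lift $B$ of $b$ the polynomial $A_i-Bx_i$ is divisible by $g$ but has degree $<n$, hence $A_i=Bx_i$ identically; then $\sum_iA_ig_i=nBg$, and exactness forces $B=0$, i.e.\ $A_i\equiv0$. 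So it suffices to prove that every solution on $C$ is of Euler type.

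The next step is a dimension count via the tangent sheaf. Dualising the Euler sequence and restricting to $C$, a solution modulo the Euler ones defines a section of the twisted tangent sheaf $T_C(l-1)$, which by adjunction $\omega_C=\mathcal O_C(n-3)$ is $\mathcal O_C(l+2-n)$. For $l\le n-3$ this line bundle has negative degree on the integral curve $C$ and therefore has no nonzero section, so every solution is Euler and we are done. The boundary value $l=n-2$ gives $\mathcal O_C$, hence at most a one--dimensional space of extra solutions; I would eliminate it through the $2\times2$ minors $B_1=x_2A_3-x_3A_2$, $B_2=x_3A_1-x_1A_3$, $B_3=x_1A_2-x_2A_1$. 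A putative extra solution forces $[B_1:B_2:B_3]=[\gamma_1:\gamma_2:\gamma_3]$ on $C$ with constant proportionality factor, whence (all degrees being $<n$) $B_i=c\,g_i$ as polynomials; but $\sum_i x_iB_i\equiv0$ identically, so $0=c\sum_i x_ig_i=c\,n\,g$, giving $c=0$, and the extra solution collapses back to the Euler case, hence to $A_i\equiv0$.

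The main obstacle is that this clean picture is rigorous only when $C$ is smooth. At each node $(g_1,g_2,g_3)$ vanishes, so the tangency condition becomes vacuous there and the sections $\gamma_i$ defining the Gauss map acquire base points; equivalently, the intrinsic tangent sheaf of $C$ differs from $\omega_C^{-1}$ at the nodes, and both the negative--degree conclusion for $l\le n-3$ and the regularity of the proportionality factor at $l=n-2$ must be corrected by the node contribution. This is exactly where the hypothesis that $C$ has only nodes (normal crossings, as in the surface set up of \S\ref{ssec:setup}) enters: one must use that the Jacobian ideal sheaf of $C$ is the ideal sheaf of the reduced node scheme and that this scheme imposes independent conditions in the relevant degrees, so that no spurious tangent field on the normalisation descends to an honest plane syzygy. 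Controlling this base locus at the boundary $l=n-2$ is the crux of the argument.
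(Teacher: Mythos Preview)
The paper does not give its own proof of this lemma: it refers the reader to Severi \cite[\S 7]{Sev28} and Kawahara \cite[p.~34]{Ka}. So there is no ``paper's approach'' to compare against; what matters is whether your argument stands on its own.

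Your treatment of the smooth case is correct and pleasant. When $g$ is smooth the partials $(g_1,g_2,g_3)$ have no common zero in $\bbP^2$, hence form a regular sequence in $\bbK[x_1,x_2,x_3]$; the Koszul complex is then exact and all syzygies live in degree $\ge n-1$. Your Euler/tangent--bundle argument and the cross--product trick for $l=n-2$ recover this in a geometric way.

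The genuine gap is exactly where you say it is: the nodal case, which is the actual content of the lemma as stated. You describe what ``must'' happen --- that the Jacobian ideal sheaf equals $\calI_Z$ for the reduced node scheme $Z$, that $Z$ imposes independent conditions, and that descent from the normalisation kills spurious tangent fields --- but you do not carry any of this out. And the difficulty is real, not cosmetic: if $\nu:\tilde C\to C$ is the normalisation and $D=\nu^{-1}(Z)$ (reduced, of degree $2\delta$), then the pulled--back partials have common vanishing exactly along $D$, and the line bundle playing the role of your $T_C(l-1)$ becomes $\calO_{\tilde C}\bigl(n(l-n+2)\bigr)(D)$, of degree $n(l-n+2)+2\delta$. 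For a nodal curve with $\delta>n/2$ this is already positive at $l=n-3$, so your ``negative degree, hence no sections'' step simply fails. Likewise, in your $l=n-2$ minor argument the proportionality $B_i|_C=c\,\gamma_i$ with $c$ constant used that the $\gamma_i$ are base--point--free on $C$; at a node they all vanish, so the ratio $c$ is only a section of $\calO_{\tilde C}(D)$, and the identity $\sum x_iB_i\equiv 0$ no longer forces $c=0$. What remains to be done is to show that the descent constraint --- that the $A_i$ come from honest degree--$l$ polynomials on $\bbP^2$, not just sections on $\tilde C$ --- cuts down the extra $2\delta$ worth of freedom; equivalently, one has to analyse the syzygy sheaf of $\calI_Z$ on $\bbP^2$ rather than a line bundle on $\tilde C$. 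Until that is written out, the proof is incomplete.
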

 
 For the proof see \cite [\S 7]{Sev28} or \cite [p. 34]{Ka}. Next we can prove Severi's result:
 
 \begin{proposition}\label{prop:sev1} If $A,B,C$ are non--zero polynomials verifying \eqref {eq:pic}, then the (projective closure of the) surface with equation $A=0$ [resp. $B=0$, $C=0$] contains the base line of the pencil of planes $\calP_1$ [resp. of $\calP_2$, of  $\calP_3$] and also the jacobian scheme $J_1$ [resp. $J_2$, $J_3$] of this pencil. Moreover the (projective closures of the) surfaces $A=0, B=0, C=0$ cut out on the plane at infinity the same curve off the aforementioned lines. 
 \end{proposition}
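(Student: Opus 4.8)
The plan is to establish the statement for the surface $A=0$, the cases of $B=0$ and $C=0$ following by the cyclic symmetry $x\to y\to z\to x$, $A\to B\to C\to A$, $\calP_1\to\calP_2\to\calP_3\to\calP_1$ of Picard's relation \eqref{eq:pic}. I would split the proof into the assertion about the jacobian scheme, handled in the affine chart, and the assertions about the base lines and the curve at infinity, handled after homogenization.

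For the jacobian scheme, note that \eqref{eq:pic} is a genuine polynomial identity, so it may be evaluated at any point. A point $p\in J_1$ lies on a member $x=c$ of $\calP_1$ and is a node of the plane section $\{f(c,y,z)=0\}$, hence $f(p)=f_y(p)=f_z(p)=0$; since $J_1\cap\Gamma=\emptyset$ by assumption, $p$ is a smooth point of $S$, so its gradient is nonzero and therefore $f_x(p)\neq0$. Evaluating \eqref{eq:pic} at $p$ annihilates the terms $Bf_y$, $Cf_z$ and $Nf$ and leaves $A(p)f_x(p)=0$, whence $A(p)=0$. As $J_1$ is reduced, this gives $J_1\subset\{A=0\}$.

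For the remaining assertions I would pass to homogeneous coordinates. Let $\hat A,\hat B,\hat C$ (of degree $d-2$) and $\hat N$ (of degree $d-3$) be the homogenizations of $A,B,C,N$; by \eqref{eq:der}, homogenizing \eqref{eq:pic} to degree $2d-3$ produces the projective identity $\hat A F_1+\hat B F_2+\hat C F_3=\hat N F$. Restricting to the plane at infinity $x_4=0$ and writing $\bar g:=g(x_1,x_2,x_3,0)$ for the leading form of $g$, and using $F_i(x_1,x_2,x_3,0)=\bar f_{x_i}$, I obtain $\bar A\,\bar f_{x_1}+\bar B\,\bar f_{x_2}+\bar C\,\bar f_{x_3}=\bar N\,\bar f$, an identity among plane curves in $\{x_4=0\}$ whose right--hand side I would remove using Euler's identity $x_1\bar f_{x_1}+x_2\bar f_{x_2}+x_3\bar f_{x_3}=d\,\bar f$ for the degree--$d$ form $\bar f$. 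This turns it into the syzygy
$$
\bigl(\bar A-\tfrac1d\bar N x_1\bigr)\bar f_{x_1}+\bigl(\bar B-\tfrac1d\bar N x_2\bigr)\bar f_{x_2}+\bigl(\bar C-\tfrac1d\bar N x_3\bigr)\bar f_{x_3}=0
$$
of the derivatives of $\bar f$, with coefficients of degree $d-2$.

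Now $\bar f=0$ is the section of $S$ by the plane at infinity, which by the genericity of the coordinate frame is an irreducible plane curve of degree $d$ with only nodes; its Koszul syzygies have degree $d-1$, so Lemma~\ref{lem:cast} forces the displayed syzygy to vanish identically, giving $\bar A=\tfrac1d\bar N x_1$, $\bar B=\tfrac1d\bar N x_2$, $\bar C=\tfrac1d\bar N x_3$. Since $\bar A$ then vanishes on $\{x_1=0\}$, the projective closure of $A=0$ contains the base line $\{x_1=x_4=0\}$ of $\calP_1$; and away from the three base lines $\{x_i=0\}$ the curves $\{\bar A=0\}$, $\{\bar B=0\}$, $\{\bar C=0\}$ all coincide with $\{\bar N=0\}$, which is the last assertion. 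I expect the main obstacle to be precisely this last step: one must verify that the curve at infinity genuinely meets the hypotheses of Lemma~\ref{lem:cast}---irreducibility and only nodal singularities, which the generic choice of coordinates secures---and that the syzygy sits in degree $d-2$, exactly at the bound where the lemma still bites.
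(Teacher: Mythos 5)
Your proposal is correct and follows essentially the same route as the paper: the same pointwise evaluation of \eqref{eq:pic} at a node of $J_1$ (using $J_1\cap\Gamma=\emptyset$ to get $f_x\neq 0$), followed by homogenization, elimination of the right--hand side via Euler's identity, restriction to $x_4=0$, and an appeal to Castelnuovo's Lemma \ref{lem:cast} for the nodal irreducible plane section at infinity. The only cosmetic difference is that the paper applies Euler's identity to the four--variable form $F$ before setting $x_4=0$, whereas you apply it to the three--variable restriction $\bar f$ afterwards; the resulting syzygy is identical.
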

 
 \begin{proof}  First we prove that the surface with equation $A=0$ contains the scheme $J_1$. 
 Let $P$ be a point of $J_1$. Then $f, f_y,f_z$ vanish at $P$. Hence by \eqref {eq:pic}, also $Af_x$ vanishes at $P$. However $f_x$ does not vanish at $P$ because $P$ does not belong to the double curve $\Gamma$ of $S$. Hence $A$ vanishes at $P$. Similarly for the surface with equation $B=0$ [resp. $C=0$] containing the scheme $J_2$ [resp. $J_3$]. 
 
 Next, homogenize \eqref {eq:pic}. By \eqref {eq:der} (and the similar for the other derivatives) we get a relation of the form
 $$
 \bar A F_1+\bar BF_2+\bar CF_3=\bar NF
 $$
 where we denote by the bars the homogenization of the corresponding polynomials. By taking into account the Euler identity, this relation takes the form
 $$
 (d\bar A-x_1\bar N)F_1+(d\bar B-x_2\bar N)F_2+(d\bar C-x_3\bar N)F_3=x_4\bar NF_4.
 $$
 Setting $x_4=0$ and taking into account Castelnuovo's Lemma \ref {lem:cast}, we have identically 
 $$
 d\bar A-x_1\bar N\equiv 0, \quad d\bar B-x_2\bar N\equiv 0, \quad d\bar C-x_3\bar N\equiv 0
 $$
 under the condition $x_4=0$. This implies that 
 $$
 dA_0-x_1 N_0\equiv 0, \quad dB_0-x_2 N_0\equiv 0, \quad d C_0-x_3 N_0\equiv 0
 $$
 where $A_0,B_0,C_0$ are the homogeneous components of $A,B,C$ in degree $d-2$ and $N_0$ is the homogeneous component of $N$ of degree $d-3$. The assertion follows right away. \end{proof} 
 
 \begin{remark}\label{rem:lop} Note that  the surfaces with equations $A=0, B=0, C=0$ in Proposition \ref{prop:sev1} are not necessarily adjoint. Keeping the notation of the proof of Proposition \ref{prop:sev1}, set $N_0=\vartheta$. Then we have identities of the form
\begin{equation}\label{eq:iden}
 A=x\vartheta +A_1, \quad B=y\vartheta+B_1, \quad C=z\vartheta+C_1, \quad N=d\vartheta +N_1
\end{equation}
 where $A_1,B_1,C_1$ are (non--homogeneous) polynomials of degree at most $d-3$ and $N_1$ has degree at most $d-4$.
  \end{remark}
 
 Severi next proved the following proposition (see \cite[\S 9]{Sev28}):
 
 \begin{proposition}\label{prop:sev2} Let $A=0$ be the affine equation of an adjoint surface of degree $d-2$ to $S$ containing the scheme $J_1$. Then there are uniquely determined adjoint surfaces of degree $d-2$ to $S$ with affine equations $B=0$ and $C=0$, containing the schemes $J_2$ and $J_3$ respectively, such that \eqref {eq:pic} holds. Each of the polynomials $A,B,C$ uniquely determines the other two.
 \end{proposition}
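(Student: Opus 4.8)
The plan is to separate the two assertions---uniqueness of the completion and its existence together with the adjointness and the containments $J_2\subset\{B=0\}$, $J_3\subset\{C=0\}$---and to observe first that, since $x,y,z$ play symmetric roles in the Picard relation \eqref{eq:pic}, it suffices to start from $A$ and produce $(B,C,N)$: the statement that each of $A,B,C$ determines the other two then follows by permuting the coordinates. Uniqueness will rest entirely on Castelnuovo's Lemma~\ref{lem:cast}, while existence will require a local study of \eqref{eq:pic} along the double curve $\Gamma$.

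For uniqueness, suppose $A$ is completed to a relation \eqref{eq:pic} in two ways, with $(B,C,N)$ and $(B',C',N')$. Writing $B''=B-B'$, etc., subtraction gives
$$
B''f_y+C''f_z=N''f,\qquad \deg B'',\deg C''\le d-2,\ \deg N''\le d-3 .
$$
Homogenizing with respect to $x_4$ exactly as in the proof of Proposition~\ref{prop:sev1} and using \eqref{eq:der}, this becomes $\bar B''F_2+\bar C''F_3=\bar N''F$; substituting Euler's identity \eqref{eq:eul} for $F$ and collecting terms yields a syzygy
$$
\tfrac{1}{d}\bar N''x_1 F_1+\Bigl(\tfrac{1}{d}\bar N''x_2-\bar B''\Bigr)F_2+\Bigl(\tfrac{1}{d}\bar N''x_3-\bar C''\Bigr)F_3+\tfrac{1}{d}\bar N''x_4 F_4=0 .
$$
Restricting to the plane at infinity $x_4=0$ kills the last term and specializes each $F_i$ to the derivative $g_i$ of the nodal plane section $g=F(x_1,x_2,x_3,0)$, leaving a syzygy of $(g_1,g_2,g_3)$ with coefficients of degree $d-2$. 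By Castelnuovo's Lemma~\ref{lem:cast} this syzygy is trivial, so the leading forms of $B'',C'',N''$ all vanish; the relation then persists for the lower--degree remainders, and a descending induction on the degree forces $B''=C''=N''=0$.

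For existence I would construct $(B,C,N)$ by showing that the hypotheses on $A$ force the membership $Af_x\in(f,f_y,f_z)$ in the correct degrees, and then read $B,C,N$ off from a representation of $Af_x$ in that ideal; the degrees match automatically because both sides of \eqref{eq:pic} are forced into degree $2d-3$. Set--theoretically $V(f,f_y,f_z)=\Gamma\cup J_1$: at a point of $J_1$ one has $f_x\neq0$, so vanishing of $Af_x$ there is precisely the hypothesis $J_1\subset\{A=0\}$, while along $\Gamma$ one has $f_x=f_y=f_z=0$ and it is the adjointness of $A$ (its vanishing on $\Gamma$) that is available. A local model at the general (nodal) point of $\Gamma$ then shows both that these vanishings upgrade to genuine ideal membership and that the resulting $B,C$ again vanish on $\Gamma$, i.e.\ are adjoint; this does \emph{not} follow formally, as Remark~\ref{rem:lop} warns that a bare polynomial solution of \eqref{eq:pic} need not be adjoint. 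Once $B,C$ are known to be nonzero, Proposition~\ref{prop:sev1} gives $J_2\subset\{B=0\}$ and $J_3\subset\{C=0\}$ for free.

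The hard part will be exactly this local and degree bookkeeping along the singular locus of $S$. Away from $\Gamma$ everything is governed by the smooth geometry and by Proposition~\ref{prop:sev1}, but along the double curve all three partials of $f$ vanish, so \eqref{eq:pic} is vacuous to zeroth order and one must descend to the normal derivatives of $f$ across the two branches in order to see that adjointness of $A$ propagates to $B$ and $C$. Controlling the behaviour at the pinch points $G_c$, where the two sheets of $S$ along $\Gamma$ come together and where $A$ is only assumed to vanish along $G_c$, is the most delicate point; I expect that, once this local computation is in place, the remaining claims are immediate consequences of Proposition~\ref{prop:sev1} and of the uniqueness argument above.
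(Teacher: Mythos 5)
Your uniqueness argument is fine and is in the same spirit as the paper's proof of Proposition \ref{prop:sev1}: subtracting two completions of the same $A$, homogenizing, eliminating $F$ via Euler's identity and restricting to $x_4=0$ does produce a syzygy of $(g_1,g_2,g_3)$ in degree $d-2$, which Castelnuovo's Lemma \ref{lem:cast} kills, and the descent on leading forms then works. (The paper itself disposes of uniqueness more cheaply, by noting that a difference of two relations gives $(C-C')f_z=(N-N')f$ with both factors on the left of degree $<d=\deg f$.)

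The existence half, however, is not a proof but a plan, and the plan misses the one genuinely global input that the argument needs. You propose to show $Af_x\in(f,f_y,f_z)$ by a local analysis along $\Gamma$ and at the pinch points, and you assert that ``the degrees match automatically.'' They do not: a local verification at every point only places $Af_x$ in the \emph{saturation} of the ideal, and extracting a representation $Af_x=Nf-Bf_y-Cf_z$ with $\deg B,\deg C\le d-2$ and $\deg N\le d-3$ is precisely a statement that the ideal agrees with its saturation in degree $2d-3$, i.e.\ an $H^1$-vanishing statement that must be proved, not assumed. The paper supplies exactly this global ingredient: it works on the polar curve $\Gamma_3$, uses Kodaira vanishing ($h^1(X,\calO_X(-C))=0$) to see that $|2C+K_X|$ cuts out a \emph{complete} series there, and uses the linear equivalence $\calO_{\Gamma_3}(\calJ_1)=\calO_{\Gamma_3}(\calJ_2)$ of \eqref{eq:due} to produce the unique $B\in|2C+K_X|$ cutting $\calJ_2+Z$ on $\Gamma_3$ when $A$ cuts $\calJ_1+Z$. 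Only after $B$ is in hand does an ideal-membership step occur, and it is taken in the \emph{complete intersection} ideal $(f,f_z)$ (whose scheme is $\gamma_3$ plus $\Gamma$ doubled), where vanishing with the right multiplicities really does give membership with controlled degrees; this yields $C$ and $N$, and adjointness of $C$ follows because $Af_x$, $Bf_y$ and $f$ all vanish doubly along $\Gamma$. By contrast your ideal $(f,f_y,f_z)$ is not a complete intersection, its primary structure along $\Gamma$ and at the pinch and triple points is exactly the ``delicate point'' you defer, and there is no reason to expect that adjointness of $A$ plus $J_1\subset\{A=0\}$ alone forces the membership without the linear-equivalence input \eqref{eq:due}. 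As it stands the construction of $B$ and $C$ --- the substance of the Proposition --- is left unproved.
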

 
 \begin{proof} Consider $A$ as in the statement.  
The complete linear system $|2C+K_X|$ is the pull--back to $X$ of the curves cut out on $S$, off the double curve $\Gamma$, by the adjoint surfaces of degree $d-2$. Looking at the exact sequence
$$
0\longrightarrow \calO_X(-C)\longrightarrow \calO_X(2C+K_X)\longrightarrow \calO_{\Gamma_3}(2C+K_X)\longrightarrow 0
$$
we see that $|2C+K_X|$ cuts out on $\Gamma_3$ a complete linear series $\xi$, because $h^1(X, \calO_X(-C))=0$ (by the Kodaira vanishing theorem, see \cite[p. 154]{GH}). Moreover, since $h^0(X, \calO_X(-C))=0$, the restriction map
$$
H^0(X,\calO_X(2C+K_X))\longrightarrow H^0(\Gamma_3, \calO_{\Gamma_3}(2C+K_X))
$$
is injective. 

Let us abuse notation and denote by $A\in |2C+K_X|$ the pull back on $X$ of  the curve cut out on $S$ by the (projective closure of the) surface $A=0$ off $\Gamma$. Then $A$ cuts out on $\Gamma_3$ a divisor of the form $\calJ_1+Z\in \xi$. Since $\calJ_2+Z\in \xi$ by \eqref {eq:due},  there is a unique curve $B\in |2C+K_X|$ that cuts out  $\calJ_2+Z$ on $\Gamma_3$. By abusing notation, we denote by $B$ a non--zero polynomial, uniquely defined up to a constant, such that $B=0$ is the adjoint surface cutting out on $S$ off $\Gamma$ the curve whose  pull--back on $X$ is $B$. The surfaces $Af_x$ and $Bf_y$ cut out on the curve $\gamma_3$ the same divisor, hence there is a non--zero constant $b$  such that $Af_x-bBf_y=0$ on $\gamma_3$. By substituting $B$ with $-bB$ we may assume that $Af_x+Bf_y=0$ on $\gamma_3$. 

Consider now the complete intersection scheme $Y$, whose ideal is generated by $f$ and $f_z$, which consists of two components given by $\gamma_3$ and by $\Gamma$ with a double structure. Since $Af_x+Bf_y$ vanishes on $\gamma_3$ and vanishes with multiplicity 2 on $\Gamma$, then $Af_x+Bf_y$ vanishes on $Y$ and therefore $Af_x+Bf_y$ is a  combination of $f$ and $f_z$, i.e., there are polynomials $C$ and $N$, of degrees $d-2$ and $d-3$ respectively, such that \eqref {eq:pic} holds. Note that $C$ cannot be identically zero. Otherwise we would have an identity of the sort
$$
Af_x+Bf_y=Nf.
$$
This is impossible, because then $Bf_y$ would vanish along the curve $\gamma_1$, but neither $f_y$ nor $B$ can vanish along this curve. Since $Af_x$, $Bf_y$ and $f$ vanish doubly along $	\Gamma$, then $C$ vanishes along $\Gamma$ so that $C=0$ is adjoint to $S$. Moreover $C$ is uniquely determined. In fact, from another identity of the form
$$
Af_x+Bf_y+C'f_z=N'f,
$$
subtracting memberwise from \eqref {eq:pic}, we deduce
$$
(C-C')f_z=(N-N')f
$$
and $f$ would divide the left hand side, what is impossible because both factors there have degree smaller than $f$. The  assertion follows.\end{proof}
 
By taking into account Proposition \ref {prop:sev1}, one has the:
 
\begin{corollary}\label{cor:sev1} Every adjoint surface to $S$ of degree $d-2$ containing the scheme $J_1$ contains also the base line of the pencil $\calP_1$.
\end{corollary}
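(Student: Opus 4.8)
The plan is to read the statement off the two preceding propositions: the point is that the hypothesis of Proposition~\ref{prop:sev2} is exactly what manufactures a triple to which Proposition~\ref{prop:sev1} applies. So the whole proof is a chaining of these two results, with no new geometry needed.

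Concretely, I would start from an adjoint surface of degree $d-2$ to $S$, with affine equation $A=0$, containing the scheme $J_1$, as in the statement. Applying Proposition~\ref{prop:sev2} to this $A$ produces uniquely determined adjoint surfaces of degree $d-2$ with affine equations $B=0$ and $C=0$, containing $J_2$ and $J_3$ respectively, together with a polynomial $N$ of degree $d-3$, such that the Picard relation \eqref{eq:pic}, namely $Af_x+Bf_y+Cf_z=Nf$, holds. Now $A,B,C$ form precisely a triple of non--zero polynomials verifying \eqref{eq:pic}: $A$ is non--zero by hypothesis, the non--vanishing of $C$ is part of the conclusion of Proposition~\ref{prop:sev2}, and $B\not\equiv 0$ follows by the symmetric argument (if $B$ vanished identically, then \eqref{eq:pic} would read $Af_x+Cf_z=Nf$, forcing $Af_x$ to vanish along the curve $\gamma_3$ where $f=f_z=0$, which is absurd since $A$ cuts $\Gamma_3$ in the divisor $\calJ_1+Z$ and $f_x$ does not vanish there). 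Hence the hypotheses of Proposition~\ref{prop:sev1} are met, and that proposition asserts in particular that the projective closure of the surface $A=0$ contains the base line of the pencil $\calP_1$, which is exactly the desired conclusion.

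I do not expect any genuine obstacle: the argument is purely formal once the two propositions are in place, and the only point that deserves a word is the non--vanishing of the companion polynomials $B$ and $C$, which is needed merely to invoke Proposition~\ref{prop:sev1} verbatim. In fact this point is essentially cosmetic, since the homogenization step together with Castelnuovo's Lemma~\ref{lem:cast} in the proof of Proposition~\ref{prop:sev1} already isolates the coefficient of $F_1$ and yields the divisibility of the leading form $A_0$ by $x_1$, which is what puts the base line $\{x_1=x_4=0\}$ of $\calP_1$ on the surface $A=0$.
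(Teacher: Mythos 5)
Your proposal is correct and matches the paper's own (one-line) justification, which derives the corollary precisely by feeding the triple $A,B,C$ produced by Proposition~\ref{prop:sev2} into Proposition~\ref{prop:sev1}. The extra verification that $B$ and $C$ are non--zero is harmless but already implicit in the construction of Proposition~\ref{prop:sev2}, where $B$ is chosen non--zero and the non--vanishing of $C$ is proved.
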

 
We can state this corollary in an intrinsic form:
  
\begin{corollary}\label{cor:sev2} Let $X$ be a smooth, irreducible, projective surface, $C$ a very ample effective divisor on $X$ and $\calP$ a Lefschetz pencil in $|C|$. Then any curve in $|2C+K_X|$ containing the \emph{jacobian scheme} of the pencil $\calP$ (i.e., the scheme of double points of the singular curves in $\calP$) also contains the \emph{base locus scheme} of $\calP$.
 \end{corollary}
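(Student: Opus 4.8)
The plan is to derive Corollary \ref{cor:sev2} from Corollary \ref{cor:sev1} by realizing the abstract triple $(X, C, \calP)$ inside the concrete picture of \S\ref{ssec:setup} in such a way that $\calP$ becomes the coordinate pencil $\calP_1$. Since $C$ is very ample, the complete system $|C|$ embeds $X$ linearly normally in $\bbP^r$ with $r = h^0(X, \calO_X(C)) - 1$; I will assume $r \geq 5$, the cases $r=3,4$ being either elementary (for $r=3$ the surface is already smooth in $\bbP^3$, there is no double curve, and the statement can be checked directly along the lines of Proposition \ref{prop:sev1}) or handled by the same argument with a lower--dimensional centre. Under this embedding the pencil $\calP\subseteq|C|$ is cut out by the hyperplanes through a fixed axis $\Lambda\cong\bbP^{r-2}$, and its base--locus scheme is $X\cap\Lambda$, a set of $d=C^2$ reduced points in general position on $X$.

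The key device is to choose the projection centre \emph{inside} $\Lambda$: I would project to $\bbP^3$ from a general linear subspace $\Pi\cong\bbP^{r-4}$ contained in $\Lambda$. Since $\Pi$ has codimension $2$ in $\Lambda$, a general such $\Pi$ misses the finite set $X\cap\Lambda$, hence misses $X$ altogether (as $X\cap\Pi\subseteq X\cap\Lambda$), so the projection restricts to a finite morphism $\pi\colon X\to S\subseteq\bbP^3$; moreover $\Lambda$ is carried onto a line $\ell\subset\bbP^3$ and the hyperplanes through $\Lambda$ onto the planes through $\ell$. Choosing coordinates with $\ell=\{x_1=x_4=0\}$, the pencil $\calP$ is carried precisely onto the coordinate pencil $\calP_1$, its jacobian scheme onto $J_1$, and its base--locus scheme $X\cap\Lambda$ isomorphically onto $S\cap\ell$.

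Granting that $S$ has ordinary singularities and that $\calX_1=\calP$ is a Lefschetz pencil (the genericity point, discussed below), the dictionary is complete. By the description recalled in the proof of Proposition \ref{prop:sev2}, the complete system $|2C+K_X|$ is the pull--back of the curves cut on $S$ off $\Gamma$ by the adjoint surfaces of degree $d-2$; so a curve $D\in|2C+K_X|$ containing $\calJ$ corresponds to an adjoint surface $A=0$ of degree $d-2$ containing $J_1$. Corollary \ref{cor:sev1} then asserts that $A=0$ contains the whole base line $\ell$, hence $A$ vanishes on $S\cap\ell$ and its pull--back $D$ passes through $X\cap\Lambda$, which is exactly the base--locus scheme of $\calP$. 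This is the assertion of Corollary \ref{cor:sev2}.

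The main obstacle is not an estimate but the genericity bookkeeping of the second step: one must check that a projection whose centre $\Pi$ is constrained to lie in the axis $\Lambda$ is still general enough for the conclusions of \S\ref{ssec:setup} (ordinary singularities in the sense of \cite{Ro}, Lefschetz pencils, transversality of the jacobian and pinch--point schemes) to hold. Since $\calP$ is a general Lefschetz pencil its axis $\Lambda$ is already in general position and the relevant conditions are open and dense, so a general $\Pi\subseteq\Lambda$ should suffice; making this precise is the only delicate point. I note that a fully intrinsic route is also available: blowing up the $d$ base points, $\sigma\colon\tilde X\to X$, turns $\calP$ into a Lefschetz fibration $\phi\colon\tilde X\to\bbP^1$ with disjoint sections $E_1,\dots,E_d$, and one computes $\sigma^\ast(2C+K_X)\sim\omega_\phi+\sum_i E_i$, where $\omega_\phi=K_{\tilde X}+2F$ is the relative dualizing sheaf and $F$ a fibre; the corollary then becomes the statement that a section of $\omega_\phi+\sum_i E_i$ vanishing at all the nodes already lies in $H^0(\tilde X,\omega_\phi)$, i.e. vanishes along every $E_i$. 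Here the technical heart would be the local analysis at each node on its nodal fibre, and I would expect that to be the real work if one wishes to bypass the projection entirely.
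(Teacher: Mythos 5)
Your proposal is correct and follows essentially the route the paper intends: the paper offers no separate argument for Corollary \ref{cor:sev2}, presenting it simply as the intrinsic restatement of Corollary \ref{cor:sev1}, and your projection from a centre chosen inside the axis of $\calP$ is exactly the dictionary needed to make that restatement legitimate. The genericity bookkeeping you flag (ordinary singularities for a projection whose centre is constrained to the axis) is a real point, but it is the same point the paper itself leaves implicit.
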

 
Now, given an adjoint surface of degree $d-2$ to $S$ containing the scheme $J_1$, with affine equation $A=0$, consider the other two adjoint surfaces $B=0$ and $C=0$ existing by Proposition \ref {prop:sev2}. We can consider the three regular 1--forms pull backs on $X$ of the forms
$$
\frac {Ady-Bdx}{f_z}, \quad \frac {Bdz-Cdy}{f_x}, \quad \frac {Cdx-Adz}{f_y}.
$$
By the very proof of Proposition \ref {prop:sev1} we see that these forms are equal. In conclusion, if we consider the vector space ${\rm Adj}_{d-2}(S)$ of (non--homogeneous) polynomials of degree (at most) $d-2$ defining adjoint surfaces to $S$ passing through $J_1$, this determines an isomorphism 
\begin{equation}\label{eq:isomat}
\varphi: {\rm Adj}_{d-2}(S)\to H^0(X, \Omega^1_X).
\end{equation}
The map $\varphi$ sends a polynomial $A$ to the 1--form pull--back of the form \eqref {eq:1form} to $X$, where $B=0$ is the adjoint surface of degree $d-2$ described in Proposition \ref {prop:sev2}. The same by exchanging $J_1$ with $J_2$ or $J_3$. 

\subsection{Closedness of 1--forms}\label{ssec:closed}  The following result is well known:

\begin{proposition}\label{prop:closed} If $X$ is a complex, smooth, compact surface, any regular 1--form on $X$ is closed.
\end{proposition}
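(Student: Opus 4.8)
The plan is to give the classical, Hodge-theory-free argument that works on every compact complex surface and uses only that $\dim_{\bbC}X=2$ together with compactness. Let $\omega\in H^0(X,\Omega^1_X)$ be a regular $1$-form. Since $\omega$ is holomorphic it satisfies $\bar\partial\omega=0$, so that $d\omega=\partial\omega+\bar\partial\omega=\partial\omega$ is a form of pure type $(2,0)$, i.e. an \emph{a priori} merely $\mathcal C^\infty$ section of $\Omega^2_X$. The goal is to prove that this $(2,0)$-form vanishes identically.

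First I would record that $\eta:=d\omega$ is itself holomorphic and $d$-closed. Indeed $\bar\partial\eta=\bar\partial\partial\omega=-\partial\bar\partial\omega=0$, so $\eta$ is a holomorphic $2$-form; and since $X$ has complex dimension $2$ there are no nonzero $(3,0)$-forms, whence $\partial\eta=0$ as well and $d\eta=\partial\eta+\bar\partial\eta=0$. In particular the conjugate $\bar\eta=\overline{d\omega}$ is a closed form of type $(0,2)$.

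The key step is to evaluate $\int_X\eta\wedge\bar\eta$ in two ways. On the one hand, because $\eta=d\omega$ is exact and $\bar\eta$ is closed, we have $\eta\wedge\bar\eta=d\omega\wedge\bar\eta=d(\omega\wedge\bar\eta)$, the term $\omega\wedge d\bar\eta$ vanishing since $d\bar\eta=0$; as $X$ is compact and has no boundary, Stokes' theorem gives $\int_X\eta\wedge\bar\eta=\int_X d(\omega\wedge\bar\eta)=0$. On the other hand, in a local holomorphic chart $\eta=h\,dz_1\wedge dz_2$ for a smooth function $h$, so $\eta\wedge\bar\eta=|h|^2\,dz_1\wedge dz_2\wedge d\bar z_1\wedge d\bar z_2$, which is a strictly positive multiple of the standard volume form times $|h|^2$. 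Thus $\eta\wedge\bar\eta$ is a globally defined, pointwise non-negative top-form whose integral vanishes; this forces $\eta\wedge\bar\eta\equiv0$, i.e. $h\equiv0$ in every chart, so $\eta=d\omega=0$, as desired.

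The main (and essentially only) obstacle is the positivity observation that a $(2,0)$-form wedged with its conjugate is a non-negative multiple of the volume form; once this is isolated, Stokes' theorem closes the argument at once. I would stress that this proof invokes neither the K\"ahler condition nor Hodge theory, relying solely on the dimension count $\dim_{\bbC}X=2$ (which kills $(3,0)$-forms and makes $\eta\wedge\bar\eta$ a top-form) and on compactness. This is precisely the feature that breaks down in positive characteristic, where integration and Stokes' theorem are unavailable and regular $1$-forms need not be closed, in line with the discussion that follows.
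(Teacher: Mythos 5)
Your argument is correct and is essentially the same as the one the paper extracts from Barth--Hulek--Peters--Van de Ven: both apply Stokes' theorem to $\int_X d\omega\wedge \overline{d\omega}=\int_X d(\omega\wedge \overline{d\omega})=0$ and then use the pointwise positivity of a $(2,0)$-form wedged with its conjugate to conclude $d\omega=0$. The extra observations you make (that $d\omega$ is itself a holomorphic, closed $2$-form) are harmless but not needed for the argument.
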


\begin{proof} This proof is extracted from \cite [p. 137--138]{BHPV}.

Let $\omega\in H^0(X, \Omega^1_X)$ be a non--zero regular form. By Stokes' Theorem one has
\begin{equation}\label{eq:stok}
\int_X d\omega \wedge d\bar \omega=\int_X d(\omega\wedge d\bar\omega)=0.
\end{equation}
Write down locally $d\omega=fdz_1\wedge dz_2$. Then
$$
d\omega \wedge d\bar \omega=-|f|^2dz_1\wedge d\bar z_1\wedge dz_2\wedge d\bar z_2=
4 |f|^2 dx_1\wedge dy_1\wedge dx_2\wedge dy_2
$$
where $z_j=x_j+iy_j$, for $1\leq j\leq 2$, so that by \eqref {eq:stok} one gets $f=0$, i.e., $d\omega=0$.
\end{proof}

The proof of this proposition is analytic and does not hold in positive characteristic. In fact in positive characteristic there are counterexamples to Proposition \ref {prop:closed} (see \cite [Corollary] {Mu}). There is then the problem, which was classically well know (see \cite [p. 185]{Za}) and considered also in the two letters by Castelnuovo, of finding a purely algebraic proof of Proposition \ref {prop:closed}. It is useful for us to  review the classical viewpoint on this subject.

Let us keep the notation introduced above. Let $\omega$ be a regular 1--form on the surface $X$,  which is the pull--back on $X$ of the rational 1--form \eqref {eq:1form}. Then we have $d\omega=\phi dx \wedge dy$, with
$$
\phi=\frac \partial {\partial x}\Big (\frac A {f_z} \Big )+\frac \partial {\partial y}\Big (\frac B {f_z} \Big )
$$
where it is intended that the differentiations take place on the surface $X$, so that $z$ is function of $x,y$ implicitly defined by $f(x,y,z)=0$. So, for instance
$$
\frac {\partial z}{\partial x}=-\frac {f_x}{f_z}
$$
and 
$$
\begin{array}{cc}
\frac \partial {\partial x} \Big (\frac A {f_z} \Big )&=\frac {\Big (A_x+A_z\frac {\partial z}{\partial x}\Big)f_z-A\Big(f_{zx}+f_{zz}\frac {\partial z}{\partial x}\Big)}{f_z^2}=\\
&=\frac {\Big (A_x-A_z\frac {f_x}{f_z}\Big)f_z-A\Big(f_{zx}-f_{zz}\frac {f_x}{f_z}\Big)}{f_z^2}=\\
&=\frac {f_z^2A_x-f_z(Af_{zx}+A_zf_x)+f_{zz}Af_x}{f_z^3}	\\
\end{array}
$$
and similarly 
$$
\frac \partial {\partial y}\Big (\frac B {f_z} \Big )=\frac {f_z^2B_y-f_z(Bf_{zy}+B_zf_y)+f_{zz}Bf_y}{f_z^3}
$$ 
so that
\begin{equation}\label{eq:wurt}
\phi=\frac {f_z^2(A_x+B_y)-f_z(Af_{zx}+A_zf_x+Bf_{zy}+B_zf_y)+f_{zz}(Af_x+Bf_y)}{f_z^3}.
\end{equation}
Taking into account \eqref {eq:pic} and the identity
$$
\frac {\partial (Af_x+Bf_y)}{\partial z}=A_zf_x+Af_{xz}+B_zf_y+Bf_{yz},
$$
 \eqref {eq:wurt} becomes
$$
\phi=\frac 1{f_z^3}\Big [ f_z^2(A_x+B_y+C_z-N)+f(Nf_{zz}-f_zN_z) \Big ]
$$
so that 
$$
\phi=\frac {A_x+B_y+C_z-N}{f_z}, \quad \text{modulo}\quad f
$$
and this is regular on $X$. Hence if we set
$$
Q=A_x+B_y+C_z-N
$$
the polynomial $Q$ has to vanish on the double curve $\Gamma$ of $S$, because it has to vanish where $f_z$ vanishes. 

A priori $Q$ is a polynomial of degree $d-3$ but one has actually:

\begin{lemma}\label{lem:add} In the above setting $Q$ has degree $d-4$. 
\end{lemma}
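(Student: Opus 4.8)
The plan is to isolate the top--degree homogeneous component of $Q$ and show that it vanishes, so that the a priori degree $d-3$ of $Q$ drops to $d-4$.

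First I would invoke the decomposition recorded in Remark \ref{rem:lop}. Writing $\vartheta$ for the homogeneous polynomial of degree $d-3$ appearing in \eqref{eq:iden}, we have
$$
A = x\vartheta + A_1, \quad B = y\vartheta + B_1, \quad C = z\vartheta + C_1, \quad N = d\vartheta + N_1,
$$
with $A_1, B_1, C_1$ of degree at most $d-3$ and $N_1$ of degree at most $d-4$. The whole point is that the leading parts of $A, B, C$ and $N$ are all governed by the single form $\vartheta$, which is precisely the content of Severi's Proposition \ref{prop:sev1}.

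Next I would differentiate termwise. Since $(A_1)_x$, $(B_1)_y$, $(C_1)_z$ and $N_1$ all have degree at most $d-4$, the only contributions to the degree $d-3$ part of $Q$ come from the summands $x\vartheta, y\vartheta, z\vartheta$ and from $d\vartheta$. Explicitly,
$$
A_x + B_y + C_z = 3\vartheta + (x\vartheta_x + y\vartheta_y + z\vartheta_z) + (A_1)_x + (B_1)_y + (C_1)_z.
$$
Because $\vartheta$ is homogeneous of degree $d-3$ in the affine coordinates $x, y, z$, Euler's identity gives $x\vartheta_x + y\vartheta_y + z\vartheta_z = (d-3)\vartheta$, so the degree $d-3$ part of $A_x + B_y + C_z$ equals $3\vartheta + (d-3)\vartheta = d\vartheta$.

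Finally, this is exactly the degree $d-3$ part $d\vartheta$ of $N$. Hence in $Q = A_x + B_y + C_z - N$ the two degree $d-3$ contributions cancel, and one is left with $Q = (A_1)_x + (B_1)_y + (C_1)_z - N_1$, a polynomial of degree at most $d-4$, as claimed. There is no serious obstacle here: the argument is a direct computation, and the only care needed is the bookkeeping of degrees together with the correct application of Euler's identity to the homogeneous leading term $\vartheta$, keeping track of the normalization (the factor $d$ in $N = d\vartheta + N_1$) inherited from Remark \ref{rem:lop}.
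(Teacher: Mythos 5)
Your proof is correct and is essentially identical to the one in the paper: both start from the decomposition \eqref{eq:iden} of Remark \ref{rem:lop}, compute $A_x+B_y+C_z=3\vartheta+(x\vartheta_x+y\vartheta_y+z\vartheta_z)+\cdots$, apply Euler's identity to the degree $d-3$ form $\vartheta$ to get $d\vartheta$, and cancel it against the leading term of $N=d\vartheta+N_1$. Nothing to add.
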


\begin{proof} By taking into account the identities \eqref {eq:iden} in Remark \ref {rem:lop}, we have
$$
A_x=\theta+x\theta_x+\frac {\partial A_1}{\partial x}, \quad 
B_y=\theta+y\theta_y+\frac {\partial B_1}{\partial y},\quad 
C_z=\theta+z\theta_z+\frac {\partial C_1}{\partial z}
$$
where $\theta$ is a homogeneous polynomial of degree $d-3$. Hence, by Euler's identity, we get
\begin{equation}\label{eq:tre}
\begin{array}{cc}
A_x+B_y+C_z-N&=d\theta+\frac {\partial A_1}{\partial x}+\frac {\partial B_1}{\partial y}+\frac {\partial C_1}{\partial z}-(d\theta+N_1)=\\
&=\frac {\partial A_1}{\partial x}+\frac {\partial B_1}{\partial y}+\frac {\partial C_1}{\partial z}-N_1\\
\end{array}
\end{equation}
which proves the assertion. \end{proof}

In conclusion, we have
$$
\frac \partial {\partial x}\Big (\frac A {f_z} \Big )+\frac \partial {\partial y}\Big (\frac B {f_z} \Big )=\frac {Q}{f_z}
$$
and with similar computations one finds
$$
\frac \partial {\partial y}\Big (\frac B {f_x} \Big )+\frac \partial {\partial z}\Big (\frac C {f_x} \Big )=\frac {Q}{f_x},\quad  \frac \partial {\partial z}\Big (\frac C {f_y} \Big )+\frac \partial {\partial x}\Big (\frac A {f_y} \Big )=\frac {Q}{f_y}.
$$

In any event, the form $\omega$ as above is closed if and only if $Q=0$ modulo $f$. But, since $Q$ has degree smaller than $d$, this is the case if and only if $Q$ is identically zero. So, taking into acccount \eqref {eq:tre},
the problem of giving an algebraic proof of Proposition \ref {prop:closed} translates in the following:

\begin{problem}\label{prob:non} Find an algebraic proof that \eqref {eq:pic} implies either one of the two equivalent relations
\begin{equation}\label{eq:integ}
N=A_x+B_y+C_z, \quad  N_1 =\frac {\partial A_1}{\partial x}+\frac {\partial B_1}{\partial y}+\frac {\partial C_1}{\partial z}
\end{equation}
each of which is called the \emph{integrability condition}. 
\end{problem}

We want to stress that any solution of Problem \ref {prob:non} must use the fact that the base field $\bbK$ has characteristic zero. 

\subsection{Homogeneous form of Picard's relation}

It is useful  to describe the homogeneous form of Picard's relation \eqref {eq:pic}. This is contained in \cite [p. 119]{PS} and we expose this here for the reader's convenience. 

By \eqref {eq:eul}, we can rewrite \eqref {eq:pic} as
$$
\begin{array}{cc}
&d\cdot AF_1(x,y,z,1)+d\cdot BF_2(x,y,z,1)+d\cdot CF_3(x,y,z,1)=\\
&=N(xF_1(x,y,z,1)+yF_2(x,y,z,1)+zF_3(x,y,z,1)+F_4(x,y,z,1))\\
\end{array}
$$
Set
$$
X_1=\overline {dA-xN}, \quad X_2=\overline{dB-yN}, \quad X_3=\overline{dC-zN}, \quad X_4=-\bar N
$$
where, as usual, the bars stay for homogenization. By \eqref {eq:iden}, we have
$$
X_1=\overline {dA_1-xN_1}, \quad X_2=\overline{dB_1-yN_1}, \quad X_3=\overline{dC_1-zN_1}, \quad X_4=-\bar N
$$
and the polynomials $X_i$, with $1\leq i\leq 4$, are  of degree $d-3$. Then we have the relation
\begin{equation}\label{eq:pic2}
X_1F_1+X_2F_2+X_3F_3+X_4F_4=0
\end{equation}
which is the \emph{homogeneous Picard's relation}. If we consider the matrix
\begin{equation}\label{eq:matr}
M=\left (
\begin{array}{cccc}
X_1&X_2&X_3&X_4\\
x_1&x_2&x_3&x_4\\
\end{array}
\right),
\end{equation}
all minors of order 2 of $M$, after dehomogenization, are linear combinations of $A,B,C$ and so are in ${\rm Adj}_{d-2}(S)$. 

Suppose  the homogeneous Picard's relation \eqref {eq:pic2} holds. Taking into account the expressions of the polynomials $X_i$, for $1\leq i\leq 4$, and the relations 
\eqref {eq:iden}, the integrability relation in the form of the right hand side of  \eqref {eq:integ}, becomes
\begin{equation}\label{eq:integ2}
\frac {\partial X_1}{\partial x_1}+\frac {\partial X_2}{\partial x_2}+\frac {\partial X_3}{\partial x_3}+\frac {\partial X_4}{\partial x_4}=0
\end{equation}
which is the \emph{homogeneous integrability condition}.  Problem \ref {prob:non} can now be expressed in homogeneous form as:

\begin{problem}\label{prob:non2} Find an algebraic proof that \eqref {eq:pic2} (with all minors of order 2 of the matrix $M$ in \eqref {eq:matr}, after dehomogenization, in ${\rm Adj}_{d-2}(S)$) implies the homogeneous integrability condition \eqref {eq:integ2}.
\end{problem}

\section{Comments on Castelnuovo's letters}\label{sec:comments}

This section is devoted to explaining most of the issues raised by Castelnuovo in his two letters. Both letters focus on the understanding of the algebro--geometric meaning of the analytic irregularity $q_{\rm an}$ and on solving Problems \ref {prob:non} or \ref {prob:non2}. 

In \S\S 1 and 2 of the first letter, Castelnuovo suggests, with no proofs, various geometric interpretations of $q_{\rm an}$. Analogous remarks have been partially included by Castelnuovo  in the paper \cite {Cas} published two years after this letter. Castelnuovo does not say it, but maybe he had in mind in the letter that the various geometric interpretations of $q_{\rm an}$ could have been useful to algebro--geometrically prove the equality between $q_{\rm an}$ and $q_a:=h^1(X, \calO_X)$, that we will call the the \emph{arithmetic irregularity}, an equality that  Castelnuovo proved with analytic methods in the paper \cite {Ca1} of 40 years before (a different proof was given by Severi in \cite {Sev1};  see also \cite{Cil}). Note that this equality does not hold in positive characteristic, as proved by Igusa in \cite {Ig} (see also \cite {Mumf}). 

Let us keep the notation introduced so far.
The first result Castelnuovo states in his letter to Severi is the following:

\begin{proposition}\label{pro:cast1} Let $|C|$ be a very ample linear system on $X$. Then
$$
h^1(X, \calO_X(4C+2K_X)\otimes \calI_{G_c|X})=q_{\rm an}.
$$
\end{proposition}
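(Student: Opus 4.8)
The plan is to reduce both sides of the claimed equality to one and the same number on the smooth curve $\Gamma_3\in|3C+K_X|$, namely $h^0(\Gamma_3,\calO_{\Gamma_3}(G_c-C))$, by restricting the appropriate sheaves to $\Gamma_3$ and feeding in the incidence relation \eqref{eq:uno}.

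First I would rewrite the left-hand side. By the isomorphism \eqref{eq:isomat} one has $q_{\rm an}=\dim{\rm Adj}_{d-2}(S)$, and since degree $d-2$ adjoint surfaces correspond bijectively to $|2C+K_X|$ while passing through $J_1$ translates into containing $\calJ_1$, the space ${\rm Adj}_{d-2}(S)$ is identified with $H^0(X,\calI_{\calJ_1|X}(2C+K_X))$, so $q_{\rm an}=h^0(X,\calI_{\calJ_1|X}(2C+K_X))$. Now $\calJ_1\subset\Gamma_3$ by Remark \ref{rem:var}, hence the restriction sequence $0\to\calO_X(-C)\to\calI_{\calJ_1|X}(2C+K_X)\to\calO_{\Gamma_3}(2C+K_X-\calJ_1)\to0$, combined with $h^0(\calO_X(-C))=h^1(\calO_X(-C))=0$ (the latter by Kodaira vanishing, exactly as in the proof of Proposition \ref{prop:sev2}), gives $q_{\rm an}=h^0(\Gamma_3,\calO_{\Gamma_3}(2C+K_X-\calJ_1))$. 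Finally, the relation \eqref{eq:uno} in the form $\calO_{\Gamma_3}(\calJ_1)=\calO_{\Gamma_3}(3C+K_X-G_c)$ turns $2C+K_X-\calJ_1$ into $G_c-C$, so $q_{\rm an}=h^0(\Gamma_3,\calO_{\Gamma_3}(G_c-C))$.

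For the right-hand side I would restrict $\calI_{G_c|X}(4C+2K_X)$ to $\Gamma_3$, which is legitimate because $G_c\subset\Gamma_3$. The sequence $0\to\calO_X(C+K_X)\to\calI_{G_c|X}(4C+2K_X)\to\calO_{\Gamma_3}(4C+2K_X-G_c)\to0$, together with $h^1(\calO_X(C+K_X))=0$ (Kodaira) and $h^2(\calO_X(C+K_X))=h^0(\calO_X(-C))^\vee=0$, yields $h^1(X,\calO_X(4C+2K_X)\otimes\calI_{G_c|X})=h^1(\Gamma_3,\calO_{\Gamma_3}(4C+2K_X-G_c))$. Since $\Gamma_3$ is smooth with $\omega_{\Gamma_3}=\calO_{\Gamma_3}(3C+2K_X)$ by adjunction, Serre duality on the curve $\Gamma_3$ identifies this with $h^0(\Gamma_3,\calO_{\Gamma_3}(3C+2K_X-4C-2K_X+G_c))=h^0(\Gamma_3,\calO_{\Gamma_3}(G_c-C))$. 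Comparing with the previous paragraph closes the argument.

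The computation itself is routine diagram chasing; the substantive points are (i) the translation $q_{\rm an}=h^0(X,\calI_{\calJ_1|X}(2C+K_X))$ extracted from \eqref{eq:isomat}, and (ii) the recognition that restriction to $\Gamma_3$ followed by the incidence relation \eqref{eq:uno} collapses a zeroth cohomology on the $q_{\rm an}$ side and a first cohomology on the superabundance side (via Serre duality on $\Gamma_3$) to the \emph{same} line-bundle cohomology $h^0(\Gamma_3,\calO_{\Gamma_3}(G_c-C))$. The step demanding most care is the bookkeeping: one must be sure that both $\calJ_1$ and $G_c$ lie on the smooth curve $\Gamma_3$ (Remark \ref{rem:var}) and that the auxiliary vanishings hold, and these rest on the characteristic-zero Kodaira vanishing theorem already used elsewhere in the paper.
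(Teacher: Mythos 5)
Your proof is correct and follows essentially the same route as the paper's: both arguments restrict to $\Gamma_3$, use Kodaira vanishing on the kernel terms, invoke \eqref{eq:uno} together with $\omega_{\Gamma_3}=\calO_{\Gamma_3}(3C+2K_X)$, and identify the result with $q_{\rm an}$ via \eqref{eq:isomat}; the paper merely phrases the Serre duality step as the index of speciality of the series cut out by $|\calO_X(4C+2K_X)\otimes\calI_{G_c|X}|$. The only cosmetic gain in your version is that by passing directly to $h^1$ through the ideal-sheaf sequence you bypass the Mumford vanishing $h^1(X,\calO_X(4C+2K_X))=0$ that the paper uses to establish non-speciality.
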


\begin{proof} In Remark \ref {rem:var} we saw that $3C+K_X$ is big and nef. This  implies that also $4C+K_X$ is big and nef.

Look  at the exact sequence
$$
0\longrightarrow \calO_X(C+K_X)\longrightarrow \calO_X(4C+2K_X)\longrightarrow \calO_{\Gamma_3}(4C+2K_X)\longrightarrow   0.
$$
We have $h^i(X, \calO_X(C+K_X))=0$ for $1\leq i\leq 2$ (by the Kodaira vanishing theorem), and $h^1(X,\calO_X(4C+2K_X))=0$, because $4C+K_X$ is big and nef (by Mumford's theorem, see \cite[\S II]{Mu2}). This implies that $|4C+2K_X|$ cuts out on $\Gamma_3$ a complete, non--special linear series $g^r_n$, where
$$
r=n-p_a(\Gamma_3)
$$
(recall the definition of the curves $\Gamma_i$, $i=1,2,3$, from the beginning of \S \ref {ssec:setup}). 

Set now 
$$
h^1(X, \calO_X(4C+2K_X)\otimes \calI_{G_c|X})=h.
$$
The linear system $|\calO_X(4C+2K_X)\otimes \calI_{G_c|X}|$ cuts out on $\Gamma_3$, off $G_c$, a complete linear series $\xi=g_{n-\gamma}^{r-\gamma+h}$ (recall that $\gamma={\rm length}(G_c)$), so that $h$ is the index of speciality of $\xi$. 

Let $G$ be a general divisor of $\xi$, so that
$$
\calO_{\Gamma_3}(G+G_c)=\calO_{\Gamma_3}(4C+2K_X).
$$
Let $G'$ be a divisor on $\Gamma_3$ such that 
$$
\calO_{\Gamma_3}(G'+\calJ_1)=\calO_{\Gamma_3}(2C+K_X).
$$
Adding up these two relations and subtracting \eqref {eq:uno}, we get
$$
\calO_{\Gamma_3}(G+G')=\calO_{\Gamma_3}(3C+2K_X)=\omega_{\Gamma_3}.
$$
So we get 
$$
h=h^0(\Gamma_3, \calO_{\Gamma_3}(G')).
$$
On the other hand, by looking at the exact sequence
$$
0\longrightarrow \calO_X(-C)\longrightarrow \calO_X(2C+K_X)\longrightarrow \calO_{\Gamma_3}(2C+K_X))\longrightarrow   0,
$$
since $h^1(X, \calO_X(-C))=0$ (by the Kodaira vanishing theorem), we see that $|2C+K_X|$ cuts out on $\Gamma_3$ a complete linear series, hence 
$$
h=h^0(\Gamma_3, \calO_{\Gamma_3}(G'))=h^0(X, \calO_X(2C+K_X)\otimes \calI_{\calJ_1|X})
$$
and the assertion follows by the isomorphism $\varphi$ in \eqref {eq:isomat}.
\end{proof}

After this Castelnuovo claims that 
$$
h^1(X, \calO_X(nC+mK_X)\otimes \calI_{G_c|X})=0
$$
for $n\geq 5$ and $m\geq 1$. We have not been able to prove (or disprove) this assertion.

Another geometric interpretation of $q_{\rm an}$ that Castelnuovo suggests in the letter to Severi is the following: let $D$ be a curve in $|4C+K_X|$ that contains $G_c$ and it is smooth there, then 
\begin{equation}	\label{eq:err}
h^0(D, \calO_D(G_c))=q_{\rm an}+1.
\end{equation}
Also for this statement we could not come up with a proof (or a counterexample). 

\begin{remark}\label{rem:err} It looks rather difficult that \eqref {eq:err} could hold. In fact, consider again the curve $\Gamma_3$. Then $D$ cuts out on $\Gamma_3$ a divisor $G_c+G$, where,  by \eqref {eq:uno}, one has
\begin{equation}\label{eq:pul0}
\calO_{\Gamma_3}(G)=\calO_{\Gamma_3}(\calJ_1+H)
\end{equation}
where $H$ is a divisor cut out on $\Gamma_3$ by a hyperplane. By looking at the exact sequence
$$
0\longrightarrow \calO_X(-C)\longrightarrow \calO_X(3C+K_X)\longrightarrow  \calO_D(3C+K_X)\longrightarrow 0
$$
and since $h^1(X, \calO_X(-C))=0$, we see that $|3C+K_X|$ cuts out on $D$ a complete linear series. Hence the linear series $| \calO_D(G_c)|$ is cut out on $D$, off $G$, by the linear system $|\calO_X(3C+K_X)\otimes \calI_{G|X}|$, and therefore
\begin{equation}\label{eq:pul}
h^0(D, \calO_D(G_c))=h^0(X,\calO_X(3C+K_X)\otimes \calI_{G|X}).
\end{equation}
From the exact sequence 
$$
0\longrightarrow \calO_X\longrightarrow \calO_X(3C+K_X)\otimes \calI_{G|X}\longrightarrow \calO_{\Gamma_3}(3C+K_X)\otimes \calI_{G|X}\longrightarrow 0
$$
we have
\begin{equation}\label{eq:pul1}
h^0(X,\calO_X(3C+K_X)\otimes \calI_{G|X})\leq h^0(\Gamma_3, \calO_{\Gamma_3}(3C+K_X)\otimes \calI_{G|X})+1.
\end{equation}
By \eqref {eq:pul0}, we have
$$
\calO_{\Gamma_3}(3C+K_X)\otimes \calI_{G|X}=\calO_{\Gamma_3}(2C+K_X)\otimes \calI_{\calJ_1|X}.
$$
Since, as we saw in the proof of Proposition \ref {pro:cast1}, $|2C+K_X|$ cuts out on $\Gamma_3$ a complete linear series, we have
$$
h^0(\Gamma_3,\calO_{\Gamma_3}(2C+K_X)\otimes \calI_{\calJ_1|X})=
h^0(X,\calO_{X}(2C+K_X)\otimes \calI_{\calJ_1|X})=q_{\rm an}.
$$
Putting together this, \eqref {eq:pul} and \eqref {eq:pul1}, one gets
$$
h^0(D, \calO_D(G_c))\leq q_{\rm an}+1.
$$
Now the equality holds if and only if the restriction map
$$
H^0(X, \calO_X(3C+K_X)\otimes \calI_{G|X})\longrightarrow H^0(\Gamma_3,\calO_{\Gamma_3}(3C+K_X)\otimes \calI_{G|X})
$$
is surjective. This looks difficult because the map
$$
H^0(X, \calO_X(3C+K_X))\longrightarrow H^0(\Gamma_3,\calO_{\Gamma_3}(3C+K_X))
$$
is not surjective (it has corank $q_a$).
\end{remark}

At the end of the first section of his letter to Severi, Castelnuovo claims that: if $D$ is a curve in $|3C+K_X|$ containing $G_c$ and smooth there, then
$$
h^0(D,\calO_D(G_c))=2d-g+2p_g+\chi-e-1+\theta
$$ 
with $0\leq \theta\leq q_a$, and, as usual, $\chi=\chi(\calO_X)$ and $p_g=h^0(X,\calO_X(K_X))$. It is easy to check that this is equivalent to 
$$
2p_g+\chi-1\leq h^1(D,\calO_D(G_c))\leq 3p_g.
$$
However we have not been able to prove this. 

In the second section of the letter to Severi, Castelnuovo states the:

\begin{proposition}\label{prop:cas3} Let $\calP$ be a Lefschetz pencil in $|C|$, with jacobian scheme $\calJ$. Then 
$$
h^1(X,\calO_X(4C+K_X)\otimes \calI_{G_c|X})=h^0(X,\calO_X(2C+2K_X)\otimes \calI_{\calJ|X}).
$$
\end{proposition}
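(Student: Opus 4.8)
The plan is to reduce both sides to cohomology on the smooth curve $\Gamma_3\in|3C+K_X|$ and then match them through Serre duality, in exactly the spirit of the proof of Proposition \ref{pro:cast1}. I take the pencil $\calP$ to be one of the coordinate pencils $\calX_i$, say $\calP=\calX_1$, so that $\calJ=\calJ_1$; the relation \eqref{eq:uno} and the inclusion $\calJ_1\subset\Gamma_3$ coming from Remark \ref{rem:var} (where $\Gamma_2$ cuts out $G_c+\calJ_1$ on $\Gamma_3$) are then available. Since the coordinates are general, $\calX_1$ is itself a general Lefschetz pencil, and the left-hand side does not involve $\calP$ at all. Throughout I assume, as Castelnuovo tacitly does when he requires $|C|$ to be \emph{abbastanza ampio}, that $C-K_X$ is ample. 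By Kodaira vanishing together with Serre duality this delivers the four vanishings $h^1(X,\calO_X(C))=h^2(X,\calO_X(C))=0$ and, dually, $h^0(X,\calO_X(K_X-C))=h^1(X,\calO_X(K_X-C))=0$. Securing these (rather than any later bookkeeping) is where the genuine hypothesis on $|C|$ enters, and it is the real crux of the argument.

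For the left-hand side I would use that $G_c$ is a reduced divisor on the smooth curve $\Gamma_3$, so the sequence $0\to\calO_X(-\Gamma_3)\to\calI_{G_c|X}\to\calO_{\Gamma_3}(-G_c)\to0$, twisted by $\calO_X(4C+K_X)$ and using $\Gamma_3\in|3C+K_X|$, becomes
$$
0\longrightarrow\calO_X(C)\longrightarrow\calI_{G_c|X}(4C+K_X)\longrightarrow\calO_{\Gamma_3}(4C+K_X-G_c)\longrightarrow0.
$$
By \eqref{eq:uno} one has $\calO_{\Gamma_3}(-G_c)=\calO_{\Gamma_3}(\calJ_1-3C-K_X)$, hence the boundary term is $\calO_{\Gamma_3}(C+\calJ_1)$. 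The vanishing of $h^1$ and $h^2$ of $\calO_X(C)$ then gives $h^1(X,\calI_{G_c|X}(4C+K_X))=h^1(\Gamma_3,\calO_{\Gamma_3}(C+\calJ_1))$, and Serre duality on $\Gamma_3$, with $\omega_{\Gamma_3}=\calO_{\Gamma_3}(3C+2K_X)$ by adjunction, turns this into $h^0(\Gamma_3,\calO_{\Gamma_3}(2C+2K_X-\calJ_1))$.

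For the right-hand side I would exploit that $\calJ_1\subset\Gamma_3$ is again a reduced divisor on $\Gamma_3$, so the analogous sequence twisted by $\calO_X(2C+2K_X)$ reads
$$
0\longrightarrow\calO_X(K_X-C)\longrightarrow\calI_{\calJ_1|X}(2C+2K_X)\longrightarrow\calO_{\Gamma_3}(2C+2K_X-\calJ_1)\longrightarrow0.
$$
Here $h^0(X,\calO_X(K_X-C))=h^1(X,\calO_X(K_X-C))=0$ force the restriction $H^0(X,\calI_{\calJ_1|X}(2C+2K_X))\to H^0(\Gamma_3,\calO_{\Gamma_3}(2C+2K_X-\calJ_1))$ to be an isomorphism, so the right-hand side equals $h^0(\Gamma_3,\calO_{\Gamma_3}(2C+2K_X-\calJ_1))$ as well. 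Comparing the two computations yields the asserted equality.

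The main obstacle is conceptual rather than computational: everything downstream is a routine diagram chase, but the whole equality lives only in the sufficiently ample regime, through the vanishings $h^1(X,\calO_X(C))=h^1(X,\calO_X(K_X-C))=0$. One should also note that the right-hand side is thereby exhibited as independent of the chosen pencil, since the left-hand side does not mention $\calP$; this is the invariance statement Castelnuovo records as his $Q'$, and it is exactly the sort of ample-twist bookkeeping behind which the deeper characteristic-zero difficulties flagged in \S\ref{ssec:closed} remain hidden.
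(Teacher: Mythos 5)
Your argument is correct and follows essentially the route the paper itself indicates: the proof of Proposition \ref{prop:cas3} is not reproduced there but is said to be ``not so different'' from that of Proposition \ref{pro:cast1} (and is deferred to \cite[\S 3]{Cas}), and your reduction to the curve $\Gamma_3$ via \eqref{eq:uno} followed by Serre duality with $\omega_{\Gamma_3}=\calO_{\Gamma_3}(3C+2K_X)$ is exactly that argument transposed from $4C+2K_X$ to $4C+K_X$, landing both sides on $h^0(\Gamma_3,\calO_{\Gamma_3}(2C+2K_X-\calJ_1))$. You are also right to flag that, unlike in Proposition \ref{pro:cast1} where the residual terms are $\calO_X(C+K_X)$ and $\calO_X(4C+2K_X)$ and Kodaira--Mumford vanishing suffices for $|C|$ merely very ample, here the residual terms are $\calO_X(C)$ and $\calO_X(K_X-C)$, so an additional \emph{sufficiently ample} hypothesis (your $C-K_X$ ample) is genuinely required, consistent with Castelnuovo's recurring ``abbastanza ampio''.
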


The proof of this, not so different from the one of Proposition \ref {pro:cast1}, is contained in \cite [\S 3]{Cas} and we do not reproduce it here. Let $B$ be the  base locus scheme of the Lefschetz pencil $\calP$. Castelnuovo compares $h^0(X,\calO_X(2C+2K_X)\otimes \calI_{\calJ|X})$ with $h^0(X,\calO_X(2C+2K_X)\otimes \calI_{\calJ+B|X})$. This does not look particularly interesting and we do not dwell on it here. Castelnuovo also claims that if $\calP$ is a Lefschetz pencil in $|C|$ with jacobian scheme $\calJ$, then 
$$
h^1(X,\calO_X(3C+K_X)\otimes \calI_{\calJ|X})=2p_g
$$
but we have not been able to prove it.

Let us jump for a moment to section 4 of the letter to Severi. In this part, as well as in the paper \cite{Cas}, Castelnuovo takes for granted the existence of the so called \emph{Severi equivalence series}. Severi claimed in \cite{Sev32} that, unless the surface has an irrational pencil, there exists the rational equivalence series, of dimension $q_{\rm an}-1$, of the zero dimensional schemes of length $e$ that are zeros of non--zero 1--forms in $H^0(X,\Omega^1_X)$. In addition Severi claimed that 
$\Omega^1_X$ is generated by global sections. These claims are  false  in general, as shown by F. Catanese in \cite [\S 6] {Cat}. Hence the contents of section 4 of the letter, and of 
the paper \cite{Cas} have biases because of this.

Let us now go back to section 3 of the letter to Severi. The focus of this section is on Problems 
 \ref {prob:non} or \ref {prob:non2}. Castelnuovo proposes a few equivalent formulations of these problems, the most interesting of which, in our opinion, is the following, which is also the topic of the letter to B. Segre. 
 
 \begin{problem}
\label{prob:gral} Suppose there is a (homogeneous) relation of the form
\begin{equation}\label{eq:integ3}
Y_1F_1+Y_2F_2+Y_3F_3+Y_4F_4=QF
\end{equation}
where $Y_i=0$ are adjoint surfaces of degree $d-3$ to $S$ and $Q=0$ is a surface of degree $d-4$. Prove (algebraically) that $Q$ is an adjoint surface and that
$$
Y_i=\frac 1d x_iQ.
$$
\end{problem} 

The solution of this problem implies the solution of  Problem \ref {prob:non2}. Indeed,  we can rewrite \eqref {eq:integ3} as
$$
\sum_{i=1}^4\Big (Y_i-\frac 1d Qx_i\Big) F_i=0
$$
and this is a homogeneous Picard's relation of the type \eqref {eq:pic2}, with
$$
X_i=Y_i-\frac 1d Qx_i, \quad 1\leq i\leq 4.
$$
Problem \ref {prob:non2} asks to prove that \eqref {eq:integ2} holds, whereas Problem \ref {prob:gral} asks to prove much more, i.e., that $X_i=0$, for $1\leq i\leq 4$. So Problem \ref {prob:gral} does not look equivalent to Problem \ref {prob:non2}, and it is not at all clear if it has a solution or not. 
 
\section{Algebraic proofs via the Hodge-Fr\"olicher spectral sequence}\label{sec:illusie} 

This section is devoted to give a brief account on how algebraic proofs of the closedness of regular 1--forms and of the equality between algebraic and analytic irregularity (both in characteristic zero) can be obtained using modern tools.

\subsection{Global regular 1-forms are closed in characteristic zero}

Let $X$ be a smooth, irreducible and projective variety of arbitrary dimension over an algebraically closed field $\bbK$.
Let $\Omega_X^i$ be the sheaf of algebraic differential $i$-forms on $X$. The exterior derivative 
$d: \Omega_X^i \to \Omega_X^{i+1}$ allows to define a complex (the so-called \emph{algebraic 
de Rham complex}) and a spectral sequence (the so-called \emph{Hodge-Fr\"olicher spectral sequence}): 
$$
E_1 = \bigoplus_{i,j \ge 0} E_1^{i,j}
$$
where
$$
E_1^{i,j} := H^j(X, \Omega_X^i)
$$
and 
$$
d_1: E_1^{i,j} \to E_1^{i+1,j}
$$
is given by
$$
d: H^j(X, \Omega_X^i) \to H^j(X, \Omega_X^{i+1}).
$$
If this spectral sequence degenerates at $E_1$, then in particular we have $E_2^{1,0} = E_1^{1,0}$, i.e. 
$$
\frac{\mathrm{Ker} ( H^0(X, \Omega_X^1) \to H^0(X, \Omega_X^2) )}{\mathrm{Im} ( H^0(X, \mathcal{O}_X) \to H^0(X, \Omega_X^1) )}  
= \mathrm{Ker} ( H^0(X, \Omega_X^1) \to H^0(X, \Omega_X^2) ) = H^0(X, \Omega_X^1).
$$
Hence we see that if the Hodge-Fr\"olicher spectral sequence degenerates at $E_1$, then all global regular 1-forms are closed. 

An algebraic proof of the degeneration at $E_1$ of the Hodge-Fr\"olicher spectral sequence in characteristic zero has been obtained by Deligne and Illusie 
in the paper \cite{DI} published in 1987 (see also \cite{EV} and \cite{I} for more detailed and self-contained expositions). The strategy 
involves two steps: first, the result is proven under suitable assumptions in positive characteristic; then, by applying standard "spreading out" techniques, it is extended to characteristic zero. 

\begin{theorem} \emph{(\cite{I}, Corollary 5.6)} Let $k$ be a perfect field of characteristic $p$ and let $X$ be a smooth and proper 
$k$-scheme of dimension $<p$. If $X$ satisfies a technical assumption (namely, $X$ can be lifted over the ring $W_2(k)$ of Witt vectors 
of length $2$ over $k$), then the Hodge-Fr\"olicher spectral sequence of $X$ over $k$ degenerates at $E_1$.  
\end{theorem}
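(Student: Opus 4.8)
The plan is to reduce the degeneration statement to a decomposition of the de Rham complex in the derived category, following the strategy of Deligne and Illusie. Write $F_{X/k}\colon X\to X'$ for the relative Frobenius, where $X'=X\times_{k,\mathrm{Frob}}k$ is the Frobenius twist of $X$; since $k$ is perfect, the absolute Frobenius of $k$ is an isomorphism, so $X'$ is again a smooth proper $k$-scheme and the spaces $H^j(X',\Omega^i_{X'/k})$ and $H^j(X,\Omega^i_{X/k})$ have the same $k$-dimension. The first ingredient is the \emph{Cartier isomorphism} $C^{-1}\colon \Omega^i_{X'/k}\xrightarrow{\sim}\mathcal H^i(F_{X/k,*}\Omega^\bullet_{X/k})$, a canonical identification of each sheaf of $i$-forms on $X'$ with the $i$-th cohomology sheaf of the pushed-forward de Rham complex. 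This is where positive characteristic enters decisively, and it is available with no lifting hypothesis.

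The heart of the proof is the following decomposition theorem: if $X$ lifts to a smooth scheme over $W_2(k)$, then the truncated complex $\tau_{<p}F_{X/k,*}\Omega^\bullet_{X/k}$ is isomorphic in the derived category $D(X')$ to the direct sum $\bigoplus_{i<p}\Omega^i_{X'/k}[-i]$ of its cohomology sheaves, the Cartier isomorphism identifying the summands. I would construct this decomposition in the standard way. A lift of $X$ over $W_2(k)$ need not provide a global lift of Frobenius, but it does provide such lifts $\tilde F$ locally on an affine cover. For a local lift $\tilde F$ and a local section $a'$ of $\calO_{X'}$, the difference between $\tilde F^*(\tilde a')$ and the naive $p$-th power is divisible by $p$, so the expression $\tfrac1p\, d\big(\tilde F^*(\tilde a')\big)$ makes sense modulo $p$ and defines a map $\Omega^1_{X'/k}\to F_{X/k,*}\mathcal Z^1$ into closed $1$-forms splitting the Cartier isomorphism in degree one. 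The key point is that although $\tilde F$ is only local, the induced map in the derived category is independent of the choice of lift, so these local maps glue to a global morphism $\varphi^1\colon \Omega^1_{X'/k}[-1]\to F_{X/k,*}\Omega^\bullet_{X/k}$; the higher components $\varphi^i$ are obtained by taking exterior powers, which is exactly where the hypothesis $\dim X<p$ is needed, since the antisymmetrization introduces denominators $i!$ that must be invertible.

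Granting the decomposition, the degeneration follows formally. Taking hypercohomology and using that $F_{X/k}$ is finite, hence affine, so that $\mathbb H^n(X',F_{X/k,*}\Omega^\bullet_{X/k})=\mathbb H^n(X,\Omega^\bullet_{X/k})=H^n_{\mathrm{dR}}(X/k)$, the decomposition yields
$$
H^n_{\mathrm{dR}}(X/k)\;\cong\;\bigoplus_{i+j=n}H^j\big(X',\Omega^i_{X'/k}\big)\;\cong\;\bigoplus_{i+j=n}H^j\big(X,\Omega^i_{X/k}\big),
$$
the last isomorphism by the remark on $X'$ above; the hypothesis $\dim X<p$ guarantees that the truncation $\tau_{<p}$ discards nothing. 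Comparing dimensions, $\dim_k H^n_{\mathrm{dR}}(X/k)=\sum_{i+j=n}\dim_k H^j(X,\Omega^i_{X/k})$, and this numerical equality is precisely the assertion that the Hodge--Fr\"olicher spectral sequence, whose $E_1$-page is $\bigoplus H^j(X,\Omega^i_X)$ and which abuts to $H^\bullet_{\mathrm{dR}}(X/k)$, degenerates at $E_1$.

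The main obstacle is exactly the construction and gluing of the derived-category decomposition: producing $\varphi^1$ globally in spite of the non-existence of a global Frobenius lift, and controlling the exterior-power construction of the $\varphi^i$ within the range $i<p$. Everything else---the Cartier isomorphism, the hypercohomology computation, and the passage from a derived splitting to numerical degeneration---is formal once this step is in place.
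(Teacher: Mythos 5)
The paper does not prove this theorem: it is quoted verbatim from Illusie's survey (\cite{I}, Corollary 5.6), and the argument you sketch --- the Cartier isomorphism, the derived-category splitting of $\tau_{<p}F_{X/k,*}\Omega^\bullet_{X/k}$ via local Frobenius lifts furnished by the $W_2(k)$-lifting, the extension to higher degrees by exterior powers with $i!$ invertible for $i<p$, and the final hypercohomology dimension count --- is precisely the Deligne--Illusie proof that this citation points to. Your outline is correct; the one place where it understates the difficulty is the gluing of the local maps $\varphi^1$: the fact that the induced map in the derived category is independent of the local Frobenius lift does not by itself produce a global morphism (derived categories do not satisfy descent), and the actual argument constructs explicit homotopies between the maps attached to two lifts on overlaps and assembles them through a \v{C}ech-type resolution --- but you correctly identify this gluing as the main obstacle rather than claiming it is automatic.
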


\begin{corollary} \emph{(\cite{I}, Theorem 6.9)} Let $\bbK$ be a field of characteristic zero and let $X$ be a smooth and proper 
$\bbK$-scheme of arbitrary dimension. Then the Hodge-Fr\"olicher spectral sequence of $X$ over $\bbK$ degenerates at $E_1$.  
\end{corollary}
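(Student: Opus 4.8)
The plan is to deduce this characteristic-zero Corollary from the positive-characteristic Theorem by the standard \emph{spreading out} (reduction modulo $p$) technique; the whole point is to transport the $E_1$-degeneration from a suitable finite field back to $\bbK$.

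First I would reformulate $E_1$-degeneration numerically. For any smooth proper $Y$ over a field $\kappa$, the Hodge--Fr\"olicher spectral sequence always gives $\dim_\kappa H^n_{dR}(Y/\kappa)\le \sum_{i+j=n}\dim_\kappa H^j(Y,\Omega^i_{Y/\kappa})$ for every $n$, with equality for all $n$ if and only if the spectral sequence degenerates at $E_1$. Both sides are moreover unchanged under extension of the base field, since coherent cohomology and the hypercohomology of the de Rham complex commute with the flat base change $\kappa\to\kappa'$. Hence I may replace $\bbK$ by a subfield finitely generated over $\bbQ$ over which $X$ and all relevant data are already defined, and it suffices to prove the numerical equalities there.

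Next I would spread $X$ out. Choose a finitely generated $\bbZ$-subalgebra $R\subset\bbK$, a domain with $\operatorname{Frac}(R)=\bbK$, together with a smooth proper morphism $\pi\colon\calX\to\operatorname{Spec}(R)$ whose generic fibre is $X$; this exists after inverting finitely many elements of $R$. Shrinking $\operatorname{Spec}(R)$ further, I may assume simultaneously that: (i) the Hodge sheaves $R^j\pi_*\Omega^i_{\calX/R}$ are locally free of constant rank $h^{i,j}=\dim_\bbK H^j(X,\Omega^i_X)$ and their formation commutes with base change; and (ii) $R$ is smooth over $\bbZ[1/M]$ for some integer $M$. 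Every closed point $s\in\operatorname{Spec}(R)$ then has finite, hence perfect, residue field $k(s)$, and the residue characteristics are unbounded, so I may pick $s$ with $p:=\operatorname{char}k(s)>\dim X$ and $p\nmid M$. By (ii) the residue map $R\to k(s)$ is a morphism of $\bbZ[1/M]$-algebras, and since $W_2(k(s))\to k(s)$ is a square-zero surjection, the formal smoothness of $R$ over $\bbZ[1/M]$ lifts it to a $\bbZ[1/M]$-algebra map $R\to W_2(k(s))$; pulling $\calX$ back along $\operatorname{Spec}W_2(k(s))\to\operatorname{Spec}R$ exhibits $\calX_s$ as a smooth proper $k(s)$-scheme of dimension $<p$ admitting a lift over $W_2(k(s))$. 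The Theorem then applies and yields $\dim_{k(s)}H^n_{dR}(\calX_s/k(s))=\sum_{i+j=n}h^{i,j}$ for all $n$.

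The hard part is the transfer back to the characteristic-zero fibre, and this is where I expect the main obstacle. Note first that $E_1$-degeneration is a \emph{closed} condition on $\operatorname{Spec}(R)$: the de Rham Betti numbers $b^n(t)=\dim_{k(t)}H^n_{dR}(\calX_t/k(t))$ are upper semicontinuous, the Hodge numbers are now constant, and degeneration means $b^n=\sum_{i+j=n}h^{i,j}$ is maximal. Consequently degeneration at the special point $s$ does not propagate to the generic point $\eta$ by semicontinuity alone, which only gives $b^n(\eta)\le b^n(s)$, the wrong direction. To bound $b^n(\eta)$ from below I would bring in a deformation-invariant cohomology: for $\ell\neq p$, smooth proper base change gives $\dim_{\bbQ_\ell}H^n_{et}(\calX_{\bar\eta},\bbQ_\ell)=\dim_{\bbQ_\ell}H^n_{et}(\calX_{\bar s},\bbQ_\ell)$, and after embedding $\bbK\hookrightarrow\bbC$ the de Rham comparison theorem identifies $b^n(\eta)$ with the left-hand side. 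One is then reduced to matching the $\ell$-adic Betti numbers of $\calX_s$ with its de Rham dimensions $b^n(s)=\sum_{i+j=n}h^{i,j}$. This last matching is the genuinely delicate point: in positive characteristic $\dim_{k(s)}H^n_{dR}(\calX_s)$ can exceed the $\ell$-adic Betti number because of torsion in the crystalline cohomology of $\calX_s$, so one must rule out this torsion (equivalently, show that the total de Rham dimension of $\calX_s$ equals its total Betti number). Once this is secured, the chain of (in)equalities closes up and forces $b^n(\eta)=\sum_{i+j=n}h^{i,j}$ for all $n$, i.e. degeneration over $\bbK$. This torsion control, together with the careful bookkeeping of the spreading-out, is exactly the technical heart of the Deligne--Illusie reduction carried out in \cite{I}.
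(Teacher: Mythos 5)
Your overall architecture --- the numerical reformulation of $E_1$-degeneration, invariance under base field extension, spreading out over a finitely generated $\bbZ$-algebra $R$ with $\operatorname{Frac}(R)\subset\bbK$, shrinking so that the Hodge sheaves are free and $R$ is smooth over some $\bbZ[1/M]$, choosing a closed point $s$ of residue characteristic $p>\dim X$, and producing the $W_2(k(s))$-lift from the infinitesimal lifting property --- is exactly the standard Deligne--Illusie reduction that the paper invokes via \cite{I}. The gap is in your final step. You treat the de Rham numbers $b^n(t)=\dim_{k(t)}H^n_{\rm dR}(\calX_t/k(t))$ as merely upper semicontinuous, and to get the missing lower bound at the generic point you route through $\ell$-adic \'etale cohomology, smooth proper base change, and the Betti--de Rham comparison over $\bbC$. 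That forces you to identify $\dim_{k(s)}H^n_{\rm dR}(\calX_s/k(s))$ with the $\ell$-adic Betti number of $\calX_{\bar s}$, i.e., to rule out torsion in the crystalline cohomology of the special fibre --- a statement you explicitly leave unproved and which does not follow from anything you have established. As written the argument does not close.

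No such input is needed. Since $\pi$ is proper and $\Omega^\bullet_{\calX/R}$ is a bounded complex of coherent sheaves flat over $R$, the hypercohomology $R\pi_*\Omega^\bullet_{\calX/R}$ is a perfect complex of $R$-modules; shrinking $\operatorname{Spec}(R)$ once more, you may assume every $H^n_{\rm dR}(\calX/R)$ is finite free and that its formation commutes with arbitrary base change --- exactly the condition you already imposed on the Hodge sheaves in (i). Then $b^n(t)$ is \emph{constant} on $\operatorname{Spec}(R)$, so the equality $b^n(s)=\sum_{i+j=n}h^{i,j}$ delivered by the positive-characteristic theorem at $s$ is literally the same equality at the generic point, which is degeneration over $\operatorname{Frac}(R)$ and hence, by the base-change invariance you proved at the outset, over $\bbK$. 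This is the point of the spreading-out: one arranges constancy, not mere semicontinuity, of both sides, and the detour through \'etale and crystalline cohomology disappears.
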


For a friendly introduction to this circle of ideas we refer the interested reader to the informal survey \cite{M} (see also \cite {Mumf}, which explains the role of Witt vectors in studying the irregularity in positive characteristic). Unluckily, 
it seems that in order to address the case of surfaces one needs to apply the whole machinery developed for the general case.  
 
\subsection{Analytic irregularity and arithmetic irregularity coincide}
Let $X$ be a smooth and projective surface over the complex field $\mathbb{C}$. 
As already realized (at least implicitly) by Castelnuovo, the fact that the \emph{analytic irregularity} $q_{\rm an}(X)=h^0(X,\Omega^1_X)$ 
is equal to the \emph{arithmetic irregularity} $q_a(X)=h^1(X, \calO_X)$ (which holds in general only in characteristic zero) is strictly related to the closedness of global regular 1-forms. 

A crucial additional ingredient for proving algebraically that $q_{\rm an}(X)=q_a(X)$ is the following equality, which admits a purely algebraic proof 
(see for instance \cite[Mumford's remarks i) and iii) on p. 200]{ Za},  and \cite[Theorem 5.1] {B}): 
\begin{equation}\label{betti}
h^1(X, \mathbb{C}) = 2 h^1(X, \mathcal{O}_X) = 2 q_a(X).
\end{equation}

As in \cite{BHPV}, proof of Lemma (2.6) on p. 139, there is a natural exact sequence
$$
0 \to \mathbb{C} \to \mathcal{O}_X \to \mathcal{S} \to 0,
$$
where $\mathcal{S}$ denotes the sheaf of closed regular 1-forms on $X$. Since all global regular 1-forms are closed, we get an exact sequence
$$
0 \to H^0(X, \Omega_X^1) \to H^1(X, \mathbb{C}) \to H^1(X, \mathcal{O}_X). 
$$
It follows that $h^1(X, \mathbb{C}) \le h^0(X, \Omega_X^1) + h^1(X, \mathcal{O}_X)$ and together with (\ref{betti}) we may deduce
$$
h^1(X, \mathcal{O}_X) \le h^0(X, \Omega_X^1).
$$

On the other hand, the opposite inequality turns out to be much subtler and seems to require the full strength of the Hodge-Fr\"olicher spectral sequence. 
Indeed, if one defines the \emph{algebraic de Rham cohomology} $H^*_{\rm dR}(X/ \bbK)$ as the hypercohomology of the algebraic de Rham complex, then the 
equality
$$
\dim (H^1_{\rm dR}(X/ \bbK)) = q_{\rm an}(X) + q_a(X)
$$
is a formal consequence of the degeneration at $E_1$ of the Hodge-Fr\"olicher spectral sequence (see for instance \cite{M}, Lemma 3.4). 
In particular, for $\bbK = \mathbb{C}$ we have 
$$
q_{\rm an}(X) + q_a(X) = \dim (H^1_{\rm dR}(X/ \bbK)) = h^1(X, \mathbb{C}) = 2 q_a(X)
$$
by (\ref{betti}), hence we obtain $q_{\rm an}(X)=q_a(X)$.

\end{document}